\documentclass[a4paper]{article}
\usepackage[all]{xy}\usepackage[latin1]{inputenc}        
\usepackage[dvips]{graphics,graphicx}
\usepackage{amsfonts,amssymb,amsmath,color,mathrsfs, amstext}
\usepackage{amsbsy, amsopn, amscd, amsxtra, amsthm,authblk}
\usepackage{enumerate,algorithm,algpseudocode}
\usepackage{fancyhdr}
\usepackage{setspace}
\usepackage{ulem}
\usepackage{bbm}

\usepackage{upref,ulem}
\usepackage{geometry}
\geometry{left=3.5cm,right=3.5cm,top=3cm,bottom=3cm}
\usepackage{float}
\usepackage{subfigure}
\usepackage{yhmath}
\usepackage{booktabs}
\usepackage{graphicx}
\usepackage{multirow}
\usepackage{makecell}
\usepackage[colorlinks,
            linkcolor=red,
            anchorcolor=red,
            citecolor=red,
            urlcolor=black,
            ]{hyperref}

\numberwithin{equation}{section}

\newcommand{\E}{\mathbb{E}}
\newcommand{\R}{\mathbb{R}}

\def\R{\mathbb{R}}

\newcommand \footnoteONLYtext[1]
{
	\let \mybackup \thefootnote
	\let \thefootnote \relax
	\footnotetext{#1}
	\let \thefootnote \mybackup
	\let \mybackup \imareallyundefinedcommand
}

\definecolor{DarkBlue}{rgb}{0,0,.8}
\newtheorem{theorem}{Theorem}[section]

\newtheorem{definition}{Definition}[section]

\newtheorem{proposition}{Proposition}[section]

\numberwithin{equation}{section}

\providecommand{\keywords}[1]
{
  \small	
  \textbf{\textit{Keywords:}} #1
}

\providecommand{\msc}[1]
{
  \small	
  \textbf{\textit{Mathematics Subject Classification:}} #1
}

\begin{document}

\title{Solving Schr\"{o}dinger bridge problem via continuous normalizing flow}

\author[1]{Yang Jing \thanks{sharkjingyang@sjtu.edu.cn}  }
\author[2]{Lei Li \thanks{leili2010@sjtu.edu.cn} }
\author[3]{Jingtong Zhang \thanks{sffred@sjtu.edu.cn}}
\affil[1,2,3]{School of Mathematical Sciences, Shanghai Jiao Tong University, Shanghai, 200240, P.R.China.}
\affil[2]{Institute of Natural Sciences, MOE-LSC, Shanghai Jiao Tong University, Shanghai, 200240, P.R.China.}
\affil[2]{Shanghai Artificial Intelligence Laboratory}

\date{}
\maketitle

\begin{abstract}
The Schr\"{o}dinger Bridge Problem (SBP), which can be understood as an entropy-regularized optimal transport, seeks to compute stochastic dynamic mappings connecting two given distributions. SBP has shown significant theoretical importance and broad practical potential, with applications spanning a wide range of interdisciplinary fields. While theoretical aspects of the SBP are well-understood, practical computational solutions for general cases have remained challenging. This work introduces a computational framework that leverages continuous normalizing flows and score matching methods to approximate the drift in the dynamic formulation of the SBP. The learned drift term can be used for building generative models, opening new possibilities for applications in probability flow-based methods. We also provide a rigorous $\Gamma-$convergence analysis for our algorithm, demonstrating that the neuron network solutions converge to the theoretical ones as the regularization parameter tends to infinity. Lastly, we validate our algorithm through numerical experiments on fundamental cases. 
\end{abstract}
\keywords{Schr\"{o}dinger bridge; continuous normalizing flows; $\Gamma-$convergence }\\
\msc{49Q22; 68T07}

\section{Introduction}\label{sec:intro}

The challenge of finding mappings between two probability distributions is the focus of many modern machine learning applications, from generative modeling to understanding physical systems \cite{neklyudov2023action}. While deterministic methods such as Optimal Transport (OT) have offered robust frameworks for addressing this task, many real-world phenomena are inherently governed by stochastic processes, necessitating more sophisticated approaches. The Schr\"{o}dinger Bridge Problem, originally proposed by Schr\"{o}dinger \cite{schrodinger1932theorie}, addresses this challenge by seeking the most likely stochastic process connecting two probability distributions. SBP is of great theoretical importance in mathematical physics, as they provide deeper insights into the evolution of physical systems and the study of quantum information. It also demonstrates broad practical potential across various applications in interdisciplinary fields such as modeling natural stochastic
dynamical systems \cite{schiebinger2019optimal,holdijk2024stochastic}, image processing \cite{liu20232} and shape correspondence \cite{feydy2017optimal}. 

Rigorously, the Schr\"{o}dinger problem can be stated as follow \cite{christian_leonard_survey_2014}: given two distribution $\rho_0$ and $\rho_1$ on a bounded domain $\Omega$, we aim to find a path measure $\mathbb{P}^{\ast}$ such that 
\begin{equation}\label{eq:dynschb}
    \mathbb{P}^{\ast}=\mathop{\mathrm{argmin}}\limits_{\mathbb{P} \in \mathcal{D}(\rho_0,\rho_1)} \mathbb{KL}(\mathbb{P}\|\mathbb{Q}),
\end{equation}
where $\mathbb{Q}$ is a reference measure, $\mathbb{P}$ belongs to a set of path measures $ \mathcal{D}(\rho_0,\rho_1) \subset \mathcal{P}(C([0,T]; \Omega))$, with marginal measures $\rho_0$ at time $t=0$ and $\rho_1$ at time $t=T$. Here, $\mathcal{P}(C([0, T];\Omega))$ indicates the set of measures on the path space $C([0, T];\Omega)$ (continuous curves taking values in $\Omega$). The KL divergence between two probability measure $\mu$ and $\nu$ on $\Omega$ is defined by
\begin{equation*}
    \mathbb{KL}\left[\mu \| \nu\right]=\left\{\begin{array}{lr}
    \int_{\Omega} \log (\frac{d \mu}{d \nu}) d\mu, & \text { if } \mu \ll \nu ,\\
    \ \infty, & \text { else, }
    \end{array}\right.
\end{equation*}
where $\frac{d\mu}{d \nu}$ denotes the Radon-Nikodym derivative of $\mu$ with respect to $\nu$. Note that the KL divergence is non-negative by Jensen's inequality and achieves zero only if $\mu= \nu$. Moreover, it is a convex functional with respect to both arguments. In practice, $\mathbb{Q}$ is often taken to be the path measure induced by Brownian motion or some relatively simple diffusion process.

Previous research has demonstrated a close relationship between SBP and the optimal stochastic control problem \cite{dai1991stochastic}, thereby offering an alternative approach to solving the SBP. Given that $\mathbb{Q}=\sqrt{2} \sigma \mathbb{W}$, where $\mathbb{W}$ is the path measure induced by the standard Brown motion,then solving SBP \eqref{eq:dynschb} is equivalent to finding the optimal stochastic control of the following optimization problem \cite{dai1991stochastic}:
\begin{equation}\label{eq:SBPstandardQ}
\begin{array}{ll}
& \min\limits_{u} \mathbb{E}\left[\frac{1}{2} \int_0^T \|u_t(X_t,t)\|^2 dt\right], \\
\text { s.t. } & dX_t = u_t(X_t,t) dt + \sqrt{2}\sigma\,dW, \\
& X_0\sim\rho_0, X_T\sim\rho_1.\\
\end{array}
\end{equation}
Or alternatively in a form with Fokker-Planck equaiton constraints:
\begin{equation}\label{eq:schb}
\begin{array}{ll}
\min\limits_{\rho, u} & \frac{1}{2}\int_{0}^{T} \int_{\Omega}\rho(x,t) |u(x,t)|^2 dx dt, \\
\text { s.t. } &\partial_{t} \rho +\nabla \cdot(\rho u)= \sigma^2 \Delta \rho ,\\
&\rho(x,0)=\rho_{0}(x) ,\ \rho(x,T)=\rho_{1}(x) \\
\end{array}
\end{equation}
Under mild assumptions on $\rho_0$ and $\rho_1$, Schr\"{o}dinger bridge probelm \eqref{eq:schb} admits a unique minimizer $(\rho,v=\nabla \Phi)$ \cite{christian_leonard_survey_2014}, where $(\rho,\Phi)$ is the classical solutions of the Euler-Lagrange equations:
\begin{equation}
    \left\{\begin{aligned} 
    &\partial_t \rho+\nabla \cdot(\nabla \Phi \rho)=\sigma^2 \Delta \rho, \\
    & \partial_t \Phi +\frac{1}{2}|\nabla \Phi|^2=-\sigma^2 \Delta \Phi .
    \end{aligned}
    \right.
\end{equation}

If $\mathbb{Q}$ is the measure generated by the Langevin diffusion $dX_t= -\nabla U(X_t) dt + \sqrt{2}\sigma dW$, where $U(x)$ is a given potential, then solving corresponding SBP  is equivalent to finding the optimal control $u$ such that the diffusion process $dX_t= \left[u(X_t,t)-\nabla U(X_t) \right]dt + \sqrt{2}\sigma dW$ bridges the distributions $\rho_0$ and $\rho_1$ and minimizes the $L^2$ cost:
\begin{equation}\label{problem:SOC}
\begin{array}{ll}
    \min\limits_{\rho, u} & \frac{1}{2}\int_{0}^{T} \int_{\Omega}\rho(x,t) |u(x,t)|^2 dx dt, \\
\text { s.t. } &\partial_{t} \rho +\nabla \cdot(\rho (u-\nabla U))= \sigma^2 \Delta \rho ,\\
&\rho(x,0)=\rho_{0}(x) ,\ \rho(x,1)=\rho_{1}(x) \\
\end{array}
\end{equation}

Traditional approaches to solving the SBP have primarily relied on Iterative Proportional Fitting \cite{kullback1968probability} and Sinkhorn-Knopp algorithms \cite{sinkhorn1967concerning,vargas2021solving}. Iterative methods suffer from cumulative numerical errors \cite{fernandes2022shooting} and, crucially, the reliance on mesh-based discretization makes them computationally intractable for high-dimensional problems. This computational barrier has restricted the practical applications of SBP, particularly in modern machine learning contexts where high-dimensional data is prevalent. With the advancements in deep learning technologies, leveraging neural networks to efficiently solve the SBP not only accelerates the computational process but also provides novel solutions for complex systems that are difficult for traditional methods to handle, thus driving research and technological progress in related fields.

Recent advances in deep learning, particularly in the realm of continuous normalizing flows (CNFs) and diffusion models, have opened new possibilities for addressing such challenges. These approaches have demonstrated remarkable success in modeling complex probability distributions and have natural connections to differential equations \cite{song2020score,chen2018neural}. Continuous normalizing flows can empirically model a distribution measure, offering a means to approximate density evolution in scenarios such as the Fokker-Planck equation \cite{liu2023pinf} and the Benamou-Brenier formulation of optimal transport \cite{finlay2020learning}. One can model the distribution using CNFs and enforce the constraints of the associated PDEs (such as Fokker-Plank equation or continuity equation) by minimizing the residuals \cite{zeng2023adaptive,feng2021solving}, or incorporate regularization during CNF training to ensure the minimizers converge to the target solutions \cite{finlay2020train,onken2020ot}. Thus the framework of CNFs is particularly promising for the SBP due to its ability to track density evolution and its natural alignment with transport-based formulations.

In this work, we aim to solve the SBP using the framework of CNFs, which models the evolution of particles in an 'optimal' manner. We will employ the dynamic formulation of the SBP and use Monte-Carlo method to approximate the loss function. The SDE backbone of the SBP dynamic formulation introduces challenges in estimating density using the change of variables formula (see the discussion in Sec \ref{sec:Derivation}). To address this, we propose a hypothetical velocity field to transform the Fokker-Planck equation into the transport equation form, aligning the SBP with the framework of CNFs. Building on this, our algorithm introduces a novel mesh-free framework that leverages CNFs to solve the dynamic formulation of the SBP. This approach offers several key advantages:
\begin{itemize}
\item It provides a theoretically grounded method for finding the connecting stochastic differential equation (SDE) without requiring mesh discretization, making it scalable to higher dimensions.

\item Unlike traditional numerical methods, our approach learns a continuous representation of the solution, enabling efficient sampling and density estimation at arbitrary points in the probability path.

\item The framework naturally incorporates the stochastic aspects of the SBP through a combination of CNFs and score matching, providing a more faithful representation of the underlying probabilistic dynamics.
\end{itemize}

A portion of our algorithm draws inspiration from a series of works on regularized CNFs. By recognizing that both optimal transport and CNFs can be framed within a transport framework, Finlay et al.\cite{finlay2020train} were the first to introduce optimal transport regularization in CNFs and Onken et. al. \cite{onken2021ot} proposed OT-Flow, an improved version of the CNFs combined with optimal transport. The added regularization enforces the solutions to approximate the theoretical solutions of the traditional OT problem. Jing et al. \cite{jing2024machine} also employed a similar methodology to design a machine learning framework for learning geodesics under the spherical Wasserstein-Fisher-Rao metric based on unbalanced optimal transport theory. Note that the SBP can be interpreted as an entropy-regularized optimal transport problem, which motivates us to approach SBP using a framework similar to that of regularized CNFs.

Another fundamental challenge in adapting machine learning approaches to classical mathematical problems lies in ensuring that the computational solutions faithfully approximate the theoretical ones. In our context, while CNFs provide a feasible computational framework, we must establish that our numerical solutions converge to the true solution of the SBP. We address this through the mathematical framework of $\Gamma$-convergence, which allows us to study the convergence of minimizers in optimization problems. By formulating our computational approach as a sequence of relaxed optimization problems with increasing penalty on the terminal distribution constraint, we can rigorously demonstrate that the solutions of our algorithm converge to those of the classical SBP. This theoretical foundation bridges the gap between computational efficiency and mathematical rigor.

The rest of the paper is organized as follows. Section \ref{sec:preliminaries} is devoted to a brief review of the necessary concepts and tools established in previous works. Next, we present our approach and derive the necessary equations to solve the dynamic formulation of the SBP in Section \ref{sec:Derivation}. In Section \ref{sec:algorithm}, we propose our algorithm to solve the SBP, including training hypothetical velocity field via CNFs and recovering optimal control through score matching. A convergence analysis is developed in Section \ref{sec:Convergence}. In section \ref{sec:numericalexp}, we provide some numerical experiments to validate the algorithm.

\section{Preliminaries}\label{sec:preliminaries}

In this section, we review several necessary concepts, tools and related preliminary results for our use later. In particular, we will introduce the  CNFs, the $\Gamma$-convergence and the Benamou-Brenier functional.

\subsection{Continuous normalizing flows}

As mentioned above, we will develop a practical algorithm for the SBP using CNFs.  
The CNFs \cite{chen2018neural,finlay2020train,grathwohl2018ffjord} are a class of generative models based purely on particle transportation. They aim to construct continuous and invertible mappings between an arbitrary distribution $\rho_0$ and the standard reference distribution $\rho_1$ through the flows induced by a velocity field $v(\cdot, t)$, which is represented by a neural network.

For a given time $T$, one is trying to obtain a mapping $z: \mathbb{R}^{d} \times [0,T]\rightarrow \mathbb{R}^{d}$, which defines a continuous evolution $x \mapsto z(x, t)$ driven by the velocity field for every $x \in \R^{d}$ such that the distribution at $T$ matches $\rho_1$.  Define $\ell(x, t):=\log |\det \nabla z(x, t)|$, then $z(x,t)$ and $\ell(x,t)$ satisfy the following ODE system
\begin{equation}\label{eq:CNFODE}
\partial_{t}\left[\begin{array}{c}
z(x, t) \\
\ell(x, t)
\end{array}\right]=\left[\begin{array}{c}
v(z(x, t), t ; \boldsymbol{\theta}) \\
\operatorname{tr}(\nabla v(z(x, t), t ; \boldsymbol{\theta}))
\end{array}\right], \quad\left[\begin{array}{c}
z(x, 0) \\
\ell(x, 0)
\end{array}\right]=\left[\begin{array}{c}
x \\
0
\end{array}\right].
\end{equation}
Then the density $\rho(\cdot,t)=(z(\cdot, t))_{\#}\rho_0$ (the pushforward of $\rho_0$ under the flow map $z(\cdot, t)$) satisfies
\begin{equation}\label{originaldensityformulation}
    \log \rho_{0}(x)=\log \rho(z(x,t),t)+\log|\det \nabla z(x,t)| \quad \text{for all} \quad x \in \R^{d}.
\end{equation}
To train the dynamics, CNFs  minimize the expected negative log-likelihood given by the right-hand-side in \eqref{eq:CNFODE}, or equivalently the KL divergence between target distribution and final distribution under the constraint \eqref{eq:CNFODE}\cite{rezende2015variational,papamakarios2017masked,papamakarios2021normalizing,grathwohl2018ffjord}:
\[
J=\mathbb{KL}\left[\rho(\boldsymbol{z}(\boldsymbol{x}, T)) \| \rho_1(\boldsymbol{z}(\boldsymbol{x}, T))\right].
\]
Notably, CNFs exhibit strong capability in modeling density changes, and we leverage this advantage of CNFs to develop our algorithm for solving the SBP.

\subsection{\texorpdfstring{$\Gamma-$} \ convergence}

Another main part of this work is to establish the theoretical foundation for our approach. Especially, we employ $\Gamma$-convergence theory to demonstrate that the relaxed formulation converges to the original SBP. The $\Gamma$-convergence has been proved to be a powerful framework to analyze the convergence of optimization problems and their corresponding optimizers. 
\begin{definition}\label{def:gamma}
Let $X$ be a topological space. Let $(f_n)$ be a sequence of functionals on $X$. Define
\begin{equation}
\begin{aligned}
    \Gamma-\limsup_{n\to\infty} f_n(x)=\sup_{N_x}\limsup_{n\to\infty}\inf_{y\in N_x}f_n(y),\\
    \Gamma-\liminf_{n\to\infty} f_n(x)=\sup_{N_x}\liminf_{n\to\infty}\inf_{y\in N_x}f_n(y),\\
\end{aligned}
\end{equation}
where $N_x$ ranges over all neighbourhoods of $x$. If there exists a functional $f$ defined on $X$ such that
\begin{equation*}
    \Gamma-\limsup_{n\to\infty}f_n = \Gamma-\liminf_{n\to\infty}f_n=f,
\end{equation*}
then we say the sequence $(f_n)$ $\Gamma$-converges to f.
\end{definition}

\begin{proposition}
Any cluster point of the minimizers of a $\Gamma$-convergent sequence $(f_n)$ is a minimizer of the corresponding $\Gamma$-limit functional $f$.
\end{proposition}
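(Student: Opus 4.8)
The plan is to read off the conclusion directly from the two halves packaged in Definition~\ref{def:gamma}. Write $x_n$ for a minimizer of $f_n$, so $f_n(x_n)=\inf_X f_n$, and let $x$ be a cluster point of $(x_n)$. Since $x$ is only assumed to be a cluster point (not a limit), I would first extract a subsequence $x_{n_k}\to x$. The goal then reduces to proving $f(x)\le f(z)$ for every $z\in X$, because this is exactly the assertion that $f(x)=\inf_X f$, i.e. that $x$ minimizes $f$.

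For the upper bound I would combine minimality with the $\Gamma\text{-}\limsup$ inequality. Fix $z\in X$ and an arbitrary neighbourhood $N_z$ of $z$. Minimality of $x_n$ gives $\inf_{y\in N_z} f_n(y)\ge \inf_X f_n = f_n(x_n)$, and since a $\limsup$ along a subsequence never exceeds the $\limsup$ along the full sequence, $\limsup_k f_{n_k}(x_{n_k})\le \limsup_n \inf_{y\in N_z} f_n(y)$. Taking the supremum over neighbourhoods $N_z$ and using $\Gamma\text{-}\limsup_{n\to\infty} f_n(z)=f(z)$ yields $\limsup_k f_{n_k}(x_{n_k})\le f(z)$.

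For the lower bound I would instead use the convergence $x_{n_k}\to x$ together with the $\Gamma\text{-}\liminf$ inequality. Fix a neighbourhood $N_x$ of $x$; since $x_{n_k}\to x$, eventually $x_{n_k}\in N_x$, hence $\inf_{y\in N_x} f_{n_k}(y)\le f_{n_k}(x_{n_k})$. Passing to $\liminf$ over $k$ and using that the $\liminf$ along a subsequence dominates the $\liminf$ along the full sequence gives $\liminf_n \inf_{y\in N_x} f_n(y)\le \liminf_k f_{n_k}(x_{n_k})$; taking the supremum over $N_x$ and invoking $\Gamma\text{-}\liminf_{n\to\infty} f_n(x)=f(x)$ produces $f(x)\le \liminf_k f_{n_k}(x_{n_k})$. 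Chaining the two estimates gives
\[
f(x)\le \liminf_k f_{n_k}(x_{n_k})\le \limsup_k f_{n_k}(x_{n_k})\le f(z)
\]
for every $z\in X$, which is the claim.

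The arithmetic here is elementary; the step that needs the most care is the subsequence bookkeeping in the lower bound, where one must check that the $\Gamma\text{-}\liminf$ inequality (stated for the full index $n$) survives restriction to the convergent subsequence $n_k$ — this is precisely where the inequality $\liminf_n a_n\le\liminf_k a_{n_k}$ enters. A secondary point worth flagging is that Definition~\ref{def:gamma} is phrased with neighbourhoods rather than sequences, so I would keep the whole argument at the neighbourhood level and avoid any sequential characterization of $\Gamma$-convergence (which would implicitly require, e.g., first countability of $X$). The degenerate situations ($f\equiv+\infty$, or non-existence of minimizers of the $f_n$) are ruled out by the hypothesis that each $x_n$ is a genuine minimizer.
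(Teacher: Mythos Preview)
The paper does not actually prove this proposition; it is stated as a standard fact about $\Gamma$-convergence and used later without justification. So there is no ``paper's own proof'' to compare against. Your argument is the standard one and is essentially correct.

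One internal inconsistency is worth pointing out. You explicitly say you want to ``keep the whole argument at the neighbourhood level and avoid any sequential characterization of $\Gamma$-convergence (which would implicitly require, e.g., first countability of $X$)''. But your very first move---extracting a subsequence $x_{n_k}\to x$ from a cluster point---already requires first countability: in a general topological space a cluster point of a sequence need only admit a convergent sub\emph{net}, not a subsequence. So either (a) assume first countability from the outset (which is harmless here, since the paper's later applications take place in first-countable spaces, cf.\ Proposition~\ref{prop:connection}), or (b) rewrite both halves using only the cluster-point property ``every neighbourhood of $x$ contains $x_n$ for infinitely many $n$'' and avoid subsequences altogether. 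Option (b) is doable but takes a bit more care in the lower bound; option (a) is what you have effectively done, and you should simply own that hypothesis rather than claim to have avoided it.
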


The properties of $\Gamma$-convergence make it an optimal framework for analyzing the convergence of minimizers across a sequence of optimization problems. While direct verification of $\Gamma$-convergence through Definition \ref{def:gamma} often presents significant technical challenges, the following two propositions offers an alternative approach to establishing $\Gamma$-convergence. 

\begin{proposition}\label{prop:connection}
Suppose that $X$ is a first-countable topological space. It holds that
\begin{gather*}
\begin{aligned}
&\inf_{x^n \rightarrow x} \limsup_{n\rightarrow \infty} f_n(x_n) =\Gamma\hbox{-}\limsup_{n\to\infty} f_n ,\\
&\inf_{x^n \rightarrow x} \liminf_{n\rightarrow \infty} f_n(x_n) =\Gamma\hbox{-}\liminf_{n\to\infty} f_n ,
\end{aligned}
\end{gather*}
where $\inf_{x^n\to x}$ indicates that the infimum is taken over all sequences $\{ x^n \}$ that converge to $x$. Consequently, if 
\begin{equation*}
\inf_{x^n \rightarrow x} \limsup_{n\rightarrow \infty} f_n(x_n)=\inf_{x^n \rightarrow x} \liminf_{n\rightarrow \infty} f_n(x_n) := f
\end{equation*} exists, then $(f_n)$ is $\Gamma$-convergent to $f$.
\end{proposition}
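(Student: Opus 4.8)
The plan is to prove each of the two identities by establishing two opposite inequalities, since the two claims have identical structure (only $\limsup$ versus $\liminf$ differs). Throughout I would fix $x\in X$ and abbreviate, for a neighbourhood $N$ of $x$, the quantity $g_n(N):=\inf_{y\in N}f_n(y)$; the key elementary observation is that $N\mapsto g_n(N)$ is non-increasing under inclusion, so that $\liminf_n g_n(N)$ and $\limsup_n g_n(N)$ are both monotone in $N$.

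First I would dispatch the easy inequality, namely that the sequential expression dominates the topological one. Let $x^n\to x$ be arbitrary and let $N$ be any neighbourhood of $x$. By convergence, $x_n\in N$ for all large $n$, hence $f_n(x_n)\ge g_n(N)$ eventually; passing to the $\limsup$ (resp.\ $\liminf$) in $n$ gives $\limsup_n f_n(x_n)\ge \limsup_n g_n(N)$ (resp.\ with $\liminf$). Taking the supremum over all neighbourhoods $N$ and then the infimum over all sequences $x^n\to x$ yields $\inf_{x^n\to x}\limsup_n f_n(x_n)\ge \Gamma\text{-}\limsup_n f_n$, and likewise for $\liminf$.

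The substantive direction is the reverse inequality, and this is where first-countability enters. I would fix a countable neighbourhood basis at $x$ and, by intersecting, arrange it to be decreasing, $U_1\supseteq U_2\supseteq\cdots$. Since every neighbourhood of $x$ contains some $U_k$, monotonicity of $g_n$ lets me replace the supremum over all neighbourhoods by the increasing limit along the basis, writing $L:=\Gamma\text{-}\liminf_n f_n=\lim_k\liminf_n g_n(U_k)$ (and analogously for the $\limsup$). The goal is then to construct a single recovery sequence $x^n\to x$ whose $\liminf$ (resp.\ $\limsup$) of $f_n(x_n)$ does not exceed $L$. For the $\liminf$ case it suffices to realise the value $L$ along a subsequence: I would choose indices $n_1<n_2<\cdots$ with $g_{n_k}(U_k)<L+1/k$ (possible since $\liminf_n g_n(U_k)\le L$), pick $x_{n_k}\in U_k$ nearly attaining $g_{n_k}(U_k)$, and fill the remaining indices with points of the appropriate $U_k$; the resulting sequence converges to $x$ and satisfies $\liminf_n f_n(x_n)\le L$.

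The hard part will be the $\limsup$ recovery sequence, because there one must control $f_n(x_n)$ for \emph{all} large $n$ simultaneously, not merely along a subsequence. Here I would exploit that $\limsup_n g_n(U_k)\le L$ furnishes, for each $k$, a threshold index $N_k$ beyond which $g_n(U_k)<L+1/k$; arranging $N_1<N_2<\cdots$ and defining $x_n\in U_k$ to nearly attain $g_n(U_k)$ for $n\in[N_k,N_{k+1})$ produces a sequence with $x^n\to x$ and $\limsup_n f_n(x_n)\le L$. Two routine points then remain: that the constructed sequences indeed converge to $x$ (immediate from the decreasing-basis property together with the block structure) and that the cases $L=\pm\infty$ are covered, where $L=+\infty$ follows already from the easy inequality and $L=-\infty$ is handled by the same construction with the thresholds $L+1/k$ replaced by $-k$. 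Combining the two inequalities gives both identities, after which the stated consequence is immediate.
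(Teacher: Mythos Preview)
The paper does not actually prove this proposition; it is stated as a standard background characterisation of the $\Gamma$-upper and $\Gamma$-lower limits in first-countable spaces and is then \emph{used} (in Appendix~\ref{app:severalproofs}) to prove Proposition~\ref{pro:gammaconv}. So there is no ``paper's own proof'' to compare against.

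That said, your argument is correct and is essentially the textbook proof (cf.\ Dal Maso, \emph{An Introduction to $\Gamma$-convergence}, Prop.~8.1). The easy inequality is handled exactly right via eventual membership in any fixed neighbourhood. For the reverse inequality your use of a decreasing countable basis $(U_k)$ together with the monotonicity of $N\mapsto \inf_{y\in N}f_n(y)$ to reduce the supremum over neighbourhoods to a limit in $k$ is the standard move, and both recovery constructions are sound: for the $\liminf$ identity it indeed suffices to control a subsequence (since $\liminf_n f_n(x_n)\le \liminf_k f_{n_k}(x_{n_k})$), while for the $\limsup$ identity your block construction over intervals $[N_k,N_{k+1})$ correctly controls all large $n$. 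The treatment of the degenerate cases $L=\pm\infty$ is also fine.
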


The following gives a criterion for the $\Gamma$-convergence of a special case which is applicable for our problem later.
\begin{proposition}\label{pro:gammaconv}
Let $f_n$ and $f$ be functionals defined on a first-countable topological space $X$, s.t. $f_n \uparrow f$ pointwise, and $f_n$ is lower-semicontinuous. Then $f_n$ is $\Gamma$-convergent to $f$.
\end{proposition}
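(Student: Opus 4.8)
The plan is to reduce everything to the sequential characterization of $\Gamma$-limits supplied by Proposition~\ref{prop:connection}, which applies since $X$ is first-countable. Writing $\underline{f} = \Gamma\text{-}\liminf_{n\to\infty} f_n$ and $\overline{f} = \Gamma\text{-}\limsup_{n\to\infty} f_n$, I note first that $\underline f \le \overline f$ pointwise: for any single sequence $x_n \to x$ one has $\liminf_n f_n(x_n) \le \limsup_n f_n(x_n)$, and taking the infimum over all such sequences preserves the inequality. Hence it suffices to prove the two one-sided bounds $\overline f(x) \le f(x)$ and $\underline f(x) \ge f(x)$ for every $x \in X$; together with $\underline f \le \overline f$ these force $\underline f = \overline f = f$, which is exactly the claimed $\Gamma$-convergence.

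For the upper bound I would use the constant sequence as a recovery sequence. Taking $x_n = x$ for all $n$, which trivially converges to $x$, gives
\[
\overline f(x) = \inf_{x_n \to x} \limsup_{n\to\infty} f_n(x_n) \le \limsup_{n\to\infty} f_n(x) = \lim_{n\to\infty} f_n(x) = f(x),
\]
where the limit exists and equals $f(x)$ precisely because $f_n(x) \uparrow f(x)$ pointwise. This step uses only the monotone pointwise convergence, not the lower semicontinuity.

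The lower bound is where the hypotheses genuinely combine, and I expect it to be the main obstacle. Fix $x$ and an arbitrary sequence $x_n \to x$. For any fixed index $m$ and all $n \ge m$, monotonicity of $(f_n)$ gives $f_n(x_n) \ge f_m(x_n)$, whence
\[
\liminf_{n\to\infty} f_n(x_n) \ge \liminf_{n\to\infty} f_m(x_n) \ge f_m(x),
\]
the last inequality being the lower semicontinuity of the single fixed functional $f_m$ evaluated along $x_n \to x$. Since $m$ was arbitrary, I then let $m \to \infty$ and invoke $f_m(x) \uparrow f(x)$ to conclude $\liminf_{n} f_n(x_n) \ge \sup_m f_m(x) = f(x)$. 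As this bound holds for every sequence $x_n \to x$, taking the infimum yields $\underline f(x) = \inf_{x_n \to x}\liminf_n f_n(x_n) \ge f(x)$.

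Putting the two bounds together with $\underline f \le \overline f$ closes the argument. The only delicate point is the order of operations in the lower bound: one must freeze the lower-semicontinuous functional at level $m$ \emph{before} passing to the limit in $n$, so that lower semicontinuity is applied to a single fixed $f_m$, and only afterwards exploit the monotone convergence $f_m(x)\uparrow f(x)$ in $m$. Interchanging these two limits naively would not be justified, so isolating the fixed-$m$ estimate is the crux of the proof.
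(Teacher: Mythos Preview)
Your proof is correct and follows essentially the same approach as the paper: both reduce to the sequential characterization in Proposition~\ref{prop:connection}, use the constant sequence $x_n=x$ for the upper bound, and for the lower bound freeze an index $m$, apply monotonicity $f_n(x_n)\ge f_m(x_n)$ for $n\ge m$ together with lower semicontinuity of $f_m$, and then let $m\to\infty$. Your write-up is slightly more explicit about where lower semicontinuity enters, but the argument is the same.
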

The proof of Proposition \ref{pro:gammaconv} is attached in Appendix \ref{app:severalproofs}.

\subsection{Benamou-Brenier functional}
The Benamou-Brenier functional gives the rigorous definition of the $L^p$ norm $\int |v|^p\rho(dx)$ of the velocity field 
with general probability measure $\rho$ in the optimal transport related problems, which is defined as follows:
\begin{equation}
    \mathscr{B}_p(\rho, m):= \sup \left\{ \int_{X} a d\rho + \int_{X} b \cdot dm  : (a,b) \in C_b(X;K_q) \right\},
\end{equation}
$K_q:=\left\{(a, b) \in \mathbb{R} \times \mathbb{R}^d: a+\frac{1}{q}|b|^q \leq 0\right\}, \frac{1}{p}+\frac{1}{q}=1$.

The following proposition characterizes key properties of the Benamou-Brenier functional.
\begin{proposition}\label{prop:benamou}
The Benamou-Brenier functional  $\mathscr{B}_p$ is convex and lower semi-continuous on the space $\mathscr{M}(X) \times \mathscr{M}^{d}(X)$ with the weak convergence topology. Moreover, the following properties hold:
\begin{itemize}
\item  $\mathscr{B}_p(\rho,m) \geq 0$
\item if both $\rho$ and $m$ are absolutely continuous with respect to a same positive measure $\lambda$ on $X$, we can write $\mathscr{B}_p(\rho,m)=\int_X f_{p}(\rho(x),m(x))d \lambda(x)$, where we identify $\rho(x)$ and $m(x)$ are the densities with respect to $\lambda$, and $f_p:\mathbb{R} \times \mathbb{R}^d \to \mathbb{R} \cup \{\infty\}$ is defined as:
\begin{equation*}
f_p(t, x):=\sup _{(a, b) \in K_q}(a t+b \cdot x)= \begin{cases}\frac{1}{p} \frac{|x|^p}{t^{p-1}} & \text { if } t>0, \\ 0 & \text { if } t=0, x=0, \\ +\infty & \text { if } t=0, x \neq 0, \text { or } t<0 .\end{cases}
\end{equation*}
\item $\mathscr{B}_p(\rho,m) < +\infty$ only if $\rho \geq 0$ and $m \ll \rho $ 
\item for $\rho \geq 0$ and $m \ll \rho$, we have $m=u \cdot \rho$ and $\mathscr{B}_p(\rho,m)=\int \frac{1}{p} |u|^{p}d \rho$
\end{itemize}
\end{proposition}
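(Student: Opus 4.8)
The plan is to exploit that $\mathscr{B}_p$ is, by construction, a supremum of affine functionals of the pair $(\rho,m)$ that are continuous for the weak topology, and then to reduce the pointwise claims to the duality identity $f_p(t,x)=\sup_{(a,b)\in K_q}(at+b\cdot x)$ stated in the proposition. Once the integral representation is in hand, the remaining assertions are corollaries obtained by choosing the dominating measure $\lambda$ conveniently.

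\textbf{Convexity, lower semicontinuity, nonnegativity.} For each fixed $(a,b)\in C_b(X;K_q)$ the map $L_{a,b}(\rho,m):=\int_X a\,d\rho+\int_X b\cdot dm$ is linear in $(\rho,m)$ and, since $a,b$ are bounded and continuous, weakly continuous on $\mathscr{M}(X)\times\mathscr{M}^d(X)$. As $\mathscr{B}_p=\sup_{(a,b)}L_{a,b}$ is a pointwise supremum of such maps, it is convex (a supremum of affine maps) and lower semicontinuous (a supremum of weakly continuous maps). Nonnegativity is immediate: $(0,0)\in K_q$, so $L_{0,0}\equiv 0\le\mathscr{B}_p(\rho,m)$.

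\textbf{Integral representation: the two inequalities.} Assume $\rho=\rho(x)\lambda$ and $m=m(x)\lambda$ with densities. The upper bound is direct: for any admissible $(a,b)$ one has, $\lambda$-a.e., $a(x)\rho(x)+b(x)\cdot m(x)\le\sup_{(a',b')\in K_q}\big(a'\rho(x)+b'\cdot m(x)\big)=f_p(\rho(x),m(x))$ by the definition of $f_p$; integrating against $\lambda$ and taking the supremum over $(a,b)$ yields $\mathscr{B}_p(\rho,m)\le\int_X f_p(\rho(x),m(x))\,d\lambda$. (The closed-form expression for $f_p$ is a short Young-inequality computation from $K_q=\{a+\tfrac{1}{q}|b|^q\le0\}$, which I would relegate to a one-line verification.) The reverse inequality is where the real work lies. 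Given $\varepsilon>0$, a measurable maximum/selection argument produces a $\lambda$-measurable map $x\mapsto(a^\ast(x),b^\ast(x))\in K_q$ with $a^\ast(x)\rho(x)+b^\ast(x)\cdot m(x)\ge f_p(\rho(x),m(x))-\varepsilon$ on the set where $f_p$ is finite, and with arbitrarily large value on $\{\rho=0,\ m\ne0\}$ (possible since $K_q$ is unbounded in the $b$-direction, so $f_p=+\infty$ there is approached in the limit). The difficulty, which I expect to be the main obstacle, is that these selections are merely measurable, whereas the supremum ranges only over $C_b(X;K_q)$. I would resolve this by a Lusin-type approximation: truncate to enforce boundedness, approximate $(a^\ast,b^\ast)$ by continuous functions off a set of arbitrarily small $\lambda$-measure, and compose with the (1-Lipschitz) nearest-point projection onto the closed convex set $K_q$ so admissibility is preserved. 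The $1$-homogeneity and lower semicontinuity of $f_p$, together with monotone/dominated convergence, then let me pass the pointwise near-optimality through the integral, and letting $\varepsilon\downarrow0$ gives $\mathscr{B}_p(\rho,m)\ge\int_X f_p\,d\lambda$. This is precisely the classical representation of $1$-homogeneous integral functionals on measures, so one may alternatively invoke Goffman--Serrin or Bouchitt\'e--Valadier type results rather than redoing the approximation by hand.

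\textbf{Finiteness and the explicit cost.} The last two items follow by specializing $\lambda$. Taking $\lambda=|\rho|+|m|$ (which dominates both measures) in the representation, finiteness of $\int_X f_p(\rho(x),m(x))\,d\lambda$ forces $f_p(\rho(x),m(x))<\infty$ for $\lambda$-a.e.\ $x$; by the explicit form of $f_p$ this rules out both $\rho(x)<0$ and $\{\rho(x)=0,\ m(x)\ne0\}$, i.e.\ $\rho\ge0$ and $m\ll\rho$, which is the third bullet. For the last bullet, when $\rho\ge0$ and $m\ll\rho$ we write $m=u\cdot\rho$ and choose $\lambda=\rho$; the densities are then $1$ and $u$, so $f_p(1,u)=\tfrac{1}{p}|u|^p$ and the representation gives $\mathscr{B}_p(\rho,m)=\int_X\tfrac{1}{p}|u|^p\,d\rho$. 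In summary, everything except the lower bound in the integral representation is either formal or a direct consequence of the pointwise formula for $f_p$, and the measurable-selection-plus-continuous-approximation step is the crux.
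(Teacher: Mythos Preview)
The paper does not actually prove this proposition: it is stated in Section~2.3 as a preliminary result taken from the literature (the presentation and notation follow Santambrogio, \cite{santambrogio_benamou-brenier_2015}), and no argument for it appears in the main text or in Appendix~\ref{app:severalproofs}. So there is no ``paper's own proof'' to compare against.

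That said, your proposal is correct and is precisely the standard route. The supremum-of-affine-functionals observation immediately gives convexity, weak lower semicontinuity, and nonnegativity; the integral representation is exactly the $1$-homogeneous duality $f_p=\sup_{K_q}(at+b\cdot x)$ combined with a measurable-selection/Lusin approximation to upgrade measurable near-maximizers to $C_b$ test pairs; and the last two bullets are, as you say, read off by choosing $\lambda=|\rho|+|m|$ and $\lambda=\rho$ respectively. Your identification of the lower bound in the integral representation as the only nontrivial step is accurate, and your suggested shortcut of invoking a Goffman--Serrin/Bouchitt\'e--Valadier type theorem is exactly how this is handled in the reference the paper cites.
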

In Section \ref{sec:Convergence}, we will employ the Benamou-Brenier functional to define the relevant functionals and examine the properties of feasible points.

\section{Governing equations in the CNF framework}\label{sec:Derivation}

The dynamic formulation \eqref{eq:SBPstandardQ} (or \eqref{problem:SOC}) of the Schr\"{o}dinger Bridge problem naturally leads to a stochastic differential equation framework similar to the CNF setting. Below, we will mainly focus on \eqref{eq:SBPstandardQ} as the example, as the generalization to others is similar.

We begin by considering the following SDE:
\begin{equation}\label{SchB_SDE}
    d X_t= u(X_t,t) dt + \sqrt{2} \sigma d W_t.
\end{equation}
Similar to \eqref{originaldensityformulation}, the law at $t$ is given by
\begin{gather}
\rho(z, t)=\E_{W}[\rho_0(X_t^{-1}(z)) (\det\nabla X_t(x))^{-1}],
\end{gather}
where $\E_W$ indicates the expectation over the stochastic mapping $x\mapsto z=X_t(x)$, given by the strong solution of the SDE \eqref{SchB_SDE}. Exactly due to this expectation, different stochastic trajectories are then coupled together, making the loss construction using the KL divergence in \cite{chen2018neural,onken2020ot} infeasible. One may use other penalty like the maximum mean discrepancy (MMD) but the practical performance seems not excellent.
Hence, our approach is to transform the SDE into an ODE formulation that gives the same time marginal distributions. This approach allows us to leverage CNFs to develop an algorithm for solving the SBP.

Consider the corresponding Fokker-Planck equation of \eqref{SchB_SDE} for \eqref{SchB_SDE}:
\begin{equation}\label{FP_SchB}
    \partial_{t} \rho +\nabla \cdot(\rho u)= \sigma^2 \Delta \rho.
\end{equation}
By noting that $\Delta \rho = \nabla \cdot \nabla \rho$, we can rewrite \eqref{FP_SchB} in the form of continuity equation:
\begin{equation}\label{Continuituy_equation_SchB}
    \partial_{t} \rho +\nabla \cdot [ \rho (u- \sigma^2 \nabla \log \rho)]= 0.
\end{equation}

The transformation from \eqref{FP_SchB} to \eqref{Continuituy_equation_SchB} is crucial as it reveals the underlying transport structure of the problem. By expressing the diffusion term using the score function $\nabla \log \rho$, we separate the deterministic and stochastic components of the dynamics, which will be essential for our neural network approximation. The time-varying hypothetical velocity field
\begin{equation}
    f:= u-\sigma^2 \nabla \log \rho
\end{equation}
represents a crucial decomposition of the dynamics. While similar transformations have been used in related contexts \cite{ilin2024transport,shen2024entropy}, our application to the SB problem enables a novel computational approach through neural network approximation. It is important to note that $f$ is not explicit, as we do not have access to the score function $\nabla \log \rho$ at this stage. 

The complete dynamical system we derive combines three key elements: the particle positions $z(x,t)$, the log-determinant of the transformation 
\begin{equation}\label{eq:logdeter}
\ell(x,t) := \log |\det \nabla z(x,t)|
\end{equation}
and the score function
\begin{equation}\label{eq:scorealongtraj}
s(x,t):=\nabla \log \rho(z(x,t),t).
\end{equation}
This coupled system allows us to simultaneously track the evolution of particles and their associated probability density, while maintaining the necessary regularity conditions for the SB problem.

By transforming Fokker-Plank equation \eqref{FP_SchB} into the form of transport equation allows us to track the evolution of particles $z(x,t)$ with corresponding $\ell(x,t)$ along the trajectories the same as \eqref{eq:CNFODE}. One can also track the score function $s(x,t)$ along the trajectories with the help of following proposition:
\begin{proposition}\label{proposition:score_ODE}
The log-determinant of the transformation $\ell(x,t)$ introduced in \eqref{eq:logdeter} satisfies the following ODE:
    \begin{equation*}
        \partial_t \ell(x,t)= \nabla_z \cdot f(z(x,t),t).
    \end{equation*}
Meanwhile, the score function along the trajectory $s(x,t)$ defined in \eqref{eq:scorealongtraj} satisfies
    \begin{equation*}
        \partial_t s(x,t)= -\nabla (\nabla \cdot f (z(x,t), t ))- \mathcal{J}_{f}^{\top}(z(x,t),t)s(x,t),
    \end{equation*}
where $\mathcal{J}_{f}$ denotes the Jacobian matrix of the vector-valued function $ f $, and is given by $\mathcal{J}_{f}= (\nabla_z f(z, t))^{\top}$ with
\begin{equation*}
    (\nabla_z f )_{ij}=\partial_i f_j.
\end{equation*}
\end{proposition}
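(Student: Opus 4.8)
The plan is to treat both identities as consequences of transporting quantities along the characteristic flow of the hypothetical velocity field $f$, i.e.\ along the solution of $\partial_t z(x,t)=f(z(x,t),t)$ with $z(x,0)=x$, which is exactly the flow generated by the transport equation \eqref{Continuituy_equation_SchB}. For the first identity I would start from the variational equation for the Jacobian $J(x,t):=\nabla_x z(x,t)$. Differentiating the flow ODE in $x$ and using the chain rule gives $\partial_t J=(\nabla_z f)(z(x,t),t)\,J$, where $(\nabla_z f)_{ij}=\partial_i f_j$. Applying Jacobi's formula $\partial_t\det J=\det J\,\tr(J^{-1}\partial_t J)$ together with the cyclic property of the trace, the factor $J$ cancels and $\tr(\nabla_z f)=\nabla_z\cdot f$ survives, so $\partial_t\log|\det J|=\nabla_z\cdot f(z(x,t),t)$, which is precisely $\partial_t\ell=\nabla_z\cdot f$. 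Equivalently one may differentiate the change-of-variables relation $\ell(x,t)=\log\rho_0(x)-\log\rho(z(x,t),t)$ and invoke the $\log\rho$ transport equation derived below; this is the standard instantaneous change-of-variables formula of the CNF framework with $f$ in the role of the velocity.

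For the second identity, the key preliminary step is to convert the continuity equation $\partial_t\rho+\nabla\cdot(\rho f)=0$ into an evolution equation for $\log\rho$. Expanding the divergence and dividing by $\rho$ yields the Eulerian identity $\partial_t\log\rho=-f\cdot\nabla\log\rho-\nabla\cdot f$. Writing $\psi(z,t):=\nabla_z\log\rho(z,t)$, so that $s(x,t)=\psi(z(x,t),t)$, I would then take the spatial gradient of this scalar identity. Using the gradient-of-a-dot-product rule, $\nabla(f\cdot\psi)$ splits, in the paper's convention, into $(\nabla_z\psi)f+\mathcal{J}_f^{\top}\psi$, where $\nabla_z\psi$ is the (symmetric) Hessian of $\log\rho$ and $\mathcal{J}_f=(\nabla_z f)^{\top}$. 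This produces the Eulerian equation $\partial_t\psi=-(\nabla_z\psi)f-\mathcal{J}_f^{\top}\psi-\nabla(\nabla\cdot f)$.

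Finally I would pass from the Eulerian picture to the Lagrangian one along the trajectory. By the chain rule, $\partial_t s(x,t)=(\partial_t\psi)(z,t)+(\nabla_z\psi)(z,t)\,\partial_t z=\partial_t\psi+(\nabla_z\psi)f$. Substituting the expression for $\partial_t\psi$ obtained above, the convective term $(\nabla_z\psi)f$ cancels exactly against the term produced by the chain rule, leaving $\partial_t s=-\nabla(\nabla\cdot f)-\mathcal{J}_f^{\top}s$, as claimed. The only genuine obstacle here is the bookkeeping: one must apply the gradient-of-a-dot-product rule with the paper's specific Jacobian/transpose convention and, crucially, recognize that the symmetric Hessian term $(\nabla_z\psi)f$ enters with opposite signs in the Eulerian derivative of $\psi$ and in the convective chain-rule term, so that it disappears. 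Keeping the evaluation point $z(x,t)$ distinct from the ``frozen'' spatial argument is where sign or index errors are most likely to creep in. All manipulations presuppose the smoothness and positivity of $\rho$ and the regularity of $f$ that the surrounding discussion already assumes.
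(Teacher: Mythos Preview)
Your proposal is correct and follows essentially the same route as the paper's proof: Jacobi's formula applied to the variational equation $\partial_t J=\mathcal{J}_f\,J$ for the first identity, and for the second, deriving the Eulerian equation $\partial_t\log\rho=-f\cdot\nabla\log\rho-\nabla\cdot f$ from the continuity equation, taking its spatial gradient, and observing the cancellation of the Hessian term against the convective contribution from the chain rule along the trajectory. The paper's argument is organized in the same order with the same key identities, so there is nothing substantively different to compare.
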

A proof can be found in Appendix \ref{app:severalproofs}.
Thus the whole ODE system in our framework can be given by
\begin{equation}\label{eq:original ODE}
\begin{aligned}
\partial_{t} & \left[\begin{array}{c}
z(x,t) \\
\ell (x,t)\\
s(x,t)
\end{array}\right]=\left[\begin{array}{c}
f(z, t ) \\
\nabla_z \cdot f (z(x,t), t )\\
-\nabla_z (\nabla_z \cdot f (z(x,t), t ))- \mathcal{J}_{f}^{\top}(z(x,t),t)s(x,t)
\end{array}\right], \\
&\left[\begin{array}{c}
z(x, 0) \\
\ell(x,0)\\
s(x, T)
\end{array}\right]=\left[\begin{array}{c}
x \\
0\\
\nabla \log \rho_1(z(x,T))
\end{array}\right].
\end{aligned}
\end{equation}
Here $\rho_1$ is the standard normal distribution.  The first two equations is solved from $0$ to $T$, while the third equation is supposed to be solved backward from $T$ to $0$ after having the terminal position $z(x,T)$ of particles.
Recall the dynamic formulation of Sch\"{o}dinger bridge problem, we are devoted to solve a relaxed version:
\begin{equation}\label{eq:optm}
\begin{array}{ll}
\min\limits_{\rho, v} & \int_{0}^{T} \int_{\mathbb{R}^d}\rho(x,t) |u(x,t)|^2 dx dt +\alpha \mathbb{KL}[\rho(x,T)||\rho_1(x)], \\
\text { s.t. } &\partial_{t} \rho +\nabla \cdot(\rho u)= \sigma^2 \Delta \rho ,\\
&\rho(x,0)=\rho_{0}(x). \\
\end{array}
\end{equation}

Note that the above formulation can be easily modified if the reference measure $\mathbb{Q}$ is given by other simple diffusion process. For example,  if $\mathbb{Q}$ is the measure generated by the Langevin diffusion stated in \eqref{problem:SOC}, it suffices to adjust the hypothetical velocity field $f$ by adding the given prior drift. we refer the readers to Sec \ref{sec:SOC} for further details and corresponding numerical experiments.

In above problem, KL divergence plays a soft constraint for terminal condition. As the regularization coefficient $\alpha$ goes to infinity, the minimizers of \eqref{eq:optm} should converge to the ones of classical Sch\"{o}dinger bridge problem. We will resolve this in section \ref{sec:Convergence} with the help of $\Gamma-$convergence.

\section{The CNF based machine learning framework}\label{sec:algorithm}
In this section, we propose a machine learning framework to solve the SBP via continuous normalizing flows and score matching algorithms. The training process of hypothetical velocity field is illustrated in section \ref{subsec:Training_hypothetical_velocity_field}. Additional implementation details and techniques are also discussed. Then, in section \ref{subsection:recovering_drift}, we recover the optimal drift based on the pre-trained hypothetical velocity field using classical score matching methods.

\subsection{Training hypothetical velocity field}\label{subsec:Training_hypothetical_velocity_field}
In our case, we aim to connect a real data distribution to the normal distribution via the Schr\"{o}ndinger Bridge. A neural network is employed to approximate the hypothetical velocity field. The network is trained to minimize a loss function that combines the KL divergence with the $L^2$ norm of the drift as a regularization term:
\begin{equation}
    \begin{aligned}
        \mathcal{J} &= \alpha \mathcal{J}_{\mathbb{KL}}+  \mathcal{J}_{B} \\
        \mathcal{J}_{\mathbb{KL}} &=\mathbb{KL}[\rho(z(x,T))\|\rho_1(z(x,T))] \\
         \mathcal{J}_{B} &=\int_{0}^{T}\int_{\mathbb{R}^d} \|f(z(x,t),t)-\nabla \log \rho(z(x,t),t)\|^2 \rho_0(x) dx dt
    \end{aligned}
\end{equation}
One can apply the same push-forward trick used in CNFs \cite{grathwohl2018ffjord} to simplify the KL term:
\begin{equation}
    \begin{aligned}
        & \mathbb{KL}\left[\rho(\boldsymbol{z}(\boldsymbol{x}, T)) \| \rho_1(\boldsymbol{z}(\boldsymbol{x}, T))\right] \\
        = &\int_{\mathbb{R}^d} \log \left(\frac{\rho(\boldsymbol{z}(\boldsymbol{x}, T))}{\rho_1(\boldsymbol{z}(\boldsymbol{x}, T))}\right) \rho(\boldsymbol{z}(\boldsymbol{x}, T)) \operatorname{det}(\nabla \boldsymbol{z}(\boldsymbol{x}, T)) \mathrm{d} \boldsymbol{x}, \\
        = &\int_{\mathbb{R}^d} \log \left(\frac{\rho_0(\boldsymbol{x})}{\rho_1(\boldsymbol{z}(\boldsymbol{x}, T)) \operatorname{det}(\nabla \boldsymbol{z}(\boldsymbol{x}, T))}\right) \rho_0(\boldsymbol{x}) \mathrm{d} \boldsymbol{x}, \\
        = &\int_{\mathbb{R}^d}\log \left(\rho_0(\boldsymbol{x})\right) \rho_0(\boldsymbol{x}) \mathrm{d} \boldsymbol{x}+\mathbb{E}_{\rho_{0}(x)} \left[  -\log \left(\rho_1(\boldsymbol{z}(\boldsymbol{x}, T))\right)-\log \operatorname{det}(\nabla \boldsymbol{z}(\boldsymbol{x}, T))\right] .
    \end{aligned}
\end{equation}
Note that the first part is a constant independent of the nerual network and can therefore be dropped during training. By substituting the normal distribution expression for $\rho_1$, we obtain the final loss function:
\begin{equation}\label{eq:LossSchB}
\begin{split}
    \mathcal{J}&=\mathbb{E}_{\rho_{0}(x)} \left[\alpha C(x,T)+B(x,T) \right],\\
    C(x,T)&=-\ell(x,T) +\frac{1}{2}|\boldsymbol{z}(\boldsymbol{x}, T)|^{2}+\frac{d}{2} \log (2 \pi),\\
    B(x,T)&=\frac{1}{2} \int_{0}^{T} |f(z(x, t), t)+ s(x,t)|^{2} d t,
\end{split}
\end{equation}
which can be approximated with the Monte Carlo approximation. Here, $\ell(x,t)$ and $s(x,t)$ are given in \eqref{eq:logdeter} and \eqref{eq:scorealongtraj}.  Algorithm \ref{alg:Solving_SBP_via_Normalizing_flow} presents our training process for the hypothetical velocity field.

\begin{algorithm}
\caption{Solving SBP via Normalizing flow}
\label{alg:Solving_SBP_via_Normalizing_flow}
\begin{algorithmic}[1]
\Require Data samples $\{ x_{i}\}_{i=1}^{N}$, time interval $[0,T]$, initializing network $f_{\theta}$
\For{number of training iterations}
    \State Solving ODE system \eqref{eq:original ODE} with $f_{\theta}$ to obtain $z(x_i,T)$, $\ell(x_i,T)$ and $s(x_i,t)$.
    \State Calculate cost function $\mathcal{J}=\alpha \mathcal{J}_{\mathbb{KL}}+ \mathcal{J}_{B} $
    \State Use ADAM optimizer optimizer to update network parameter of $f_{\theta}$
\EndFor
\end{algorithmic}
\end{algorithm}
In training, particularly for high-dimensional cases, two methods can be used to reduce computational costs of calculating $\nabla \cdot f$: random mini-batch method and unbiased linear-time log-density estimation.

\begin{itemize}
    \item {\bf Random mini-batch method} Inspired by the random batch method in stochastic gradient descent \cite{eon1998online} and interacting particle systems \cite{jin2020random}, one can select a mini-batch of indices to give an estimation of $\nabla \cdot f$.

    \item {\bf Unbiased linear-time log-density estimation} In general, for a vector-valued function $ f: \mathbb{R}^d \to \mathbb{R}^d $, computing the trace of the Jacobian $    \operatorname{Tr}(\nabla f) $ (or equivalently, the divergence $ \nabla \cdot f $) exactly incurs a computational cost of $\mathcal{O}(d^2)$. To avoid computing each entry of the diagonal of the Jacobian explicitly, we can leverage Hutchinson's trace estimator, as employed in CNFs \cite{grathwohl2018ffjord,finlay2020train}. This approach enables efficient approximation of $\nabla \cdot f$ when solving the second ODE in \eqref{eq:original ODE}, or when estimating data density during inference. The Hutchinson's trace estimator provides an unbiased estimate of the trace of a matrix \( A \) as follows:
    \begin{equation}
    \operatorname{Tr}(A) = \mathbb{E}_{p(\lambda)}\left[\lambda^T A \lambda\right],
    \end{equation}
    where $ \lambda $ is a $d$-dimensional random vector with zero mean $\mathbb{E}[\lambda] = 0 $ and identity covariance $\mathbf{Cov}(\lambda) = I$. Common choices for the distribution $ p(\lambda)$ include the standard Gaussian distribution or the Rademacher distribution. To compute $\operatorname{Tr}(\nabla f)$, we use automatic differentiation  to calculate  $\nabla (\lambda^T f)$ first. Then we get an unbiased estimate of trace by multiplying $\lambda$ again:
    \begin{equation*}
        \operatorname{Tr}(\nabla f) = \nabla \cdot f \approx  \nabla (\lambda^T f) \cdot \lambda .
    \end{equation*}

\end{itemize}

CNFs are trained to minimize the loss function in \eqref{eq:LossSchB}, which involves the solution to an initial value problem with an ODE parameterized by $\theta$. Chen et al. \cite{chen2018neural} employ the adjoint method to compute the gradient and subsequently perform backpropagation, which allows one to train the models with constant
memory cost as a function of depth. Typically, for any scalar loss function which operates on the solution to an initial value problem
\begin{equation}
L\left(\mathbf{z}\left(t_1\right)\right)=L\left(\int_{t_0}^{t_1} f(\mathbf{z}(t), t ; \theta) d t\right)
\end{equation}
then Pontryagin \cite{pontryagin2018mathematical} shows that its derivative takes the form of another initial value problem
\begin{equation}\label{ODE:adjoint_state}
\frac{d L}{d \theta}=-\int_{t_1}^{t_0}\left(\frac{\partial L}{\partial \mathbf{z}(t)}\right)^T \frac{\partial f(\mathbf{z}(t), t ; \theta)}{\partial \theta} d t
\end{equation}
The quantity  $\frac{\partial L}{\partial z(t)}$ is known as the adjoint state of the ODE. Chen et al. \cite{chen2018neural} proposed to use a black-box ODE solver to compute $z(t_1)$, and then another call to a solver to compute \eqref{ODE:adjoint_state} with the initial value $\frac{\partial L}{\partial z(t_1)}$. The adjoint method can be employed to solve high-dimensional cases of the SBP.

\subsection{Recovering the optimal drift}\label{subsection:recovering_drift}
It is important to note that we only have access to the score function along the trajectory. To recover the drift $u$, we can employ classical score matching algorithms to approximate the score function with a pre-trained hypothetical velocity field $f_\theta$. 

Given i.i.d. samples $\{x_1, x_2, \cdots, x_N \} \subset \mathbb{R}^{d}$ from an underlying distribution $\pi$, the classical score matching \cite{hyvarinen2005estimation} uses a neuron network $s_{\phi}(x) \in \mathbb{R}^d$ to approximate the score function $\nabla \log \pi(x)$ and minimize the $L^{2}$ loss function $\frac{1}{2} \mathbb{E}_{\pi}  ||s_{\phi}(x) - \nabla \log \pi(x)||_2^2 $, which can be shown equivalent to the following up to a constant
\begin{equation}
J_{SM}(\phi) \triangleq \mathbb{E}_{  \pi} \left[ \operatorname{tr} ( \nabla_x s_{\phi}(x) )+ \frac{1}{2} ||s_{\phi}(x)||_2^2 \right],
\end{equation}
After applying the Hutchinson's trace estimator mentioned above, the sliced score matching \cite{song2020sliced} minimize the following loss function:
\begin{equation}
J_{SSM}(\phi) \triangleq \mathbb{E}_{\pi} \mathbb{E}_{\rho(\lambda)}\left[ \lambda^{T}\nabla_{x} s_{\phi}(x)\lambda +\frac{1}{2} \left(\lambda^{T}s_{\phi}(x)\right)^{2}  \right],
\end{equation}
where $\nabla_{x} s_{\phi}$ denotes the Hessian of the score function. $\lambda \sim \rho_{\lambda}$ and $x \sim \pi$ are independent. For the sake of computational convenience, one may opt to model $\rho_{\lambda}$ as either a multivariate Rademacher distribution or a multivariate normal distribution since $\mathbf{E}(\lambda)=0$ and $\mathbf{Cov}(\lambda) = I$

In our case, to recover the drift, we can parameterize it as a neural network $u_{\phi}(x,t)$ and expect the function 
\begin{equation*}
    s_{\phi}(x,t):= \frac{u_{\phi}(x,t)-f_{\theta}(x,t) }{\sigma^2}
\end{equation*}
to approximate the ground truth score $\nabla \log \rho_t(x)$. Note that the score function $s_{\phi}(x,t)$ here is different from that in equation \eqref{eq:scorealongtraj} as the $x$ variable here indicates a general location at current time point instead of the initial location for a trajectory. Similar to Noise Conditional Score Networks (NCSNs) \cite{song2019generative} and diffusion models \cite{ho2020denoising}, the joint score matching objective is given as follows
\begin{equation}
J_{JSM}(\phi) \triangleq \mathbb{E}_{t \sim \mathcal{U}(0,1)} \mathbb{E}_{x_t \sim \rho_t} \mathbb{E}_{ \lambda \sim\rho(\lambda)} w(t) \left[ \lambda^{T}  \nabla_{x} s_{\phi}(x_t,t)  \lambda+\frac{1}{2} \left(\lambda^{T}s_{\phi}(x_t,t)\right)^{2}  \right],
\end{equation}
where $w(t)$ is a predefined weight function. Thus as long as we obtain a pre-trained hypothetical velocity field $f_{\theta}$, we can sample from standard Gaussian and generate trajectories, which will be used in the training of score matching to obtain drift function $u_{\phi}$. Algorithm \ref{alg:Drift_recovering} presents the process of recovering optimal control via score matching. We remark that if $\sigma$ is very small, one may want to train $s_{\phi}$ as an independent neural network and then recover $u_{\phi}=f_{\theta}+\sigma^2s_{\phi}$ for better stability.

\begin{algorithm}
\caption{Drift recovering in SBP}
\label{alg:Drift_recovering}
\begin{algorithmic}[1]
\Require pre-trained hypothetical velocity $f_{\theta}$, a time discretization $0= t_0 < t_1 < \cdots < t_L=T$.
\State Sample from Gaussian distribution and use $f_{\theta}$ to generate trajectories $\{ x_i^{t_k}\}_{i=1}^{N}$ by solving the first equation of the ODE system \eqref{eq:original ODE} backward in time.
\For{number of training iterations}
    \State select a batch of sample pairs $ (x_{i}^{t_k},t_k)$ from trajectories
    \State Calculate cost function $J_{JSM}$
    \State Minimizing $J_{JSM}(\phi)$  with ADAM  optimizer 
\EndFor
\State \Return Optimal drift $u_{\phi}$
\end{algorithmic}
\end{algorithm}

\section{Convergence of the optimization problem to SBP}\label{sec:Convergence}

We introduced a relaxation factor $\alpha$ for the terminal distribution in \eqref{eq:LossSchB} to render the loss function computationally tractable. We now rigorously demonstrate that the solution to our relaxed optimization problem converges to that of the original Schr\"odinger bridge problem as $\alpha$ approaches infinity by leveraging the framework of $\Gamma$-convergence, as outlined earlier. See similar analysis of the convergence from OT-Flow to optimal transport (OT) in \cite{jing2024convergence}.

For the convenience, we will assume that $\Omega\subset\R^d$ is a bounded domain with smooth boundary in this section.

\subsection{Mathematical formulation of the problems}

Following \cite{santambrogio_benamou-brenier_2015}, we make use of the variables $(\rho, m)$ where $m=\rho u$ so that there is better convexity. Then the optimization problem of SBP \eqref{eq:schb} can be reformulated as 
\begin{equation}\label{eq:schb-E}
\begin{array}{ll}
\min\limits_{\rho, m} & \int_{0}^{T} \int_{\mathbb{R}^d} \frac{|m|^2}{2\rho} dx dt+ \chi_{E}^\infty\\
\text { s.t. } &\partial_{t} \rho +\nabla \cdot m= \sigma^2 \Delta \rho ,\quad x\in \Omega\\
& (m-\sigma^2\nabla\rho)\cdot n=0, \quad x\in \partial\Omega\\
&\rho(x,0)=\rho_{0}(x),\quad x\in \Omega
\end{array}
\end{equation}
where $E$ is the set of the terminal constraints $\rho(x,T) = \rho_1(x)$ and $\chi_{E}^\infty$ is the indicator function, 
\begin{equation}
\chi_{E}^\infty = \begin{cases}
0, &\text{if } x \in E, \\
+\infty, & \text{otherwise.}
\end{cases}
\end{equation}. 
The corresponding optimization problem of our algorithm \eqref{eq:optm} is given by
\begin{equation}\label{eq:optm-E}
\begin{array}{ll}
\min\limits_{\rho, m} & \int_{0}^{T} \int_{\mathbb{R}^d} \frac{|m|^2}{2\rho} dx dt +\alpha \mathbb{KL}[\rho(x,T)||\rho_1(x)], \\
\text { s.t. } &\partial_{t} \rho +\nabla \cdot m= \sigma^2 \Delta \rho ,\quad x\in \Omega\\
& (m-\sigma^2\nabla\rho)\cdot n=0, \quad x\in \partial\Omega\\
&\rho(x,0)=\rho_{0}(x),\quad x\in \Omega.
\end{array}
\end{equation}
In general, $\rho$ and $m$ are measures and $\rho(\cdot, T)$ may not be well-defined either before the time regularity in time has been verified. Hence, we need to put the above problem into a rigorous framework.

To proceed, we first specify the topological space and associated constraints. The topology spaces we will work on are 
\[
X:=L^1([0, T]; \mathcal{P}(\Omega)), \quad  Y:= L^1([0, 1]; \mathcal{M}(\Omega)^d).
\]
Here, $\mathcal{P}(\Omega)$ is the set of probability measures on $\Omega$ and $\mathcal{M}(\Omega)^d$ is the set of $\R^d$-valued Radon measures on $\Omega$.  Then we equip the product space $X \times Y$ for $(\rho, m)$ with the product weak topology, i.e.,  $(\rho_n, m_n)\to (\rho, m)$ means: for all $f \in C_b([0, T]\times \Omega; \R)$ and $g\in C_b([0, T]\times \Omega; \R^d)$, one has
\begin{equation*}
     \int_0^T \int_{\mathbb{R}^d} f d\rho_n + \int_0^T\int_{\mathbb{R}^d} g dm_n \to \int_0^T \int_{\mathbb{R}^d} f d\rho +\int_0^T\int_{\mathbb{R}^d} g dm.
\end{equation*}
Obviously $X \times Y$ is closed and first-countable. Then the constraint $\partial_t\rho + \nabla\cdot m=\sigma^2\Delta\rho$ introduces a subspace $\mathcal{H}$ of $X$:
\begin{multline}
    \mathcal{H}:=\Bigg\{ (\rho,m) \in X \times Y: -\int_0^T \left( \int_{\Omega}(\partial_t \varphi) \rho (dx) +\nabla \varphi \cdot m(dx) \right) dt -\int_{\Omega}\varphi(x,0)\rho_{0}(dx) \\
     =\sigma^2 \int_0^T\int_\Omega\Delta\varphi\cdot\rho(dx)\,dt, \forall \varphi \in C_{b}^{2}([0,T]\times \Omega), 
     \left.\frac{\partial\varphi}{\partial n}\right\vert_{\partial\Omega}=0, \varphi(\cdot,T)=0 \Bigg\}.
\end{multline}
The subspace $\mathcal{H}$ contains the boundary conditions implicitly and naturally. This space is quite large for $\rho$, compared to the original requirement $\rho\in \mathcal{P}(C([0, T];\Omega))$. However, as we will see later, this space is more convenient for analysis and it will not include more solutions. Clearly $\mathcal{H}$ is closed due to the fact that the constraints are linear.

Similarly, $E$ can now be rigorously defined as follows:
\begin{multline}
    E:=\Bigg\{ (\rho,m) \in X \times Y: -\int_0^T \left( \int_{\Omega}(\partial_t \varphi) \rho (dx) +\nabla \varphi \cdot m(dx) \right) dt +\int_\Omega \varphi(x,T)\rho_1 (dx) \\
    -\int_{\Omega}\varphi(x,0)\rho_{0}(dx)  = \sigma^2 \int_0^T\int_\Omega\Delta\phi\cdot\rho(dx)\,dt, \forall \varphi \in C_{b}^{2}(\bar{\Omega} \times [0,T]), \left.\frac{\partial\varphi}{\partial n}\right\vert_{\partial\Omega}=0 \Bigg\}.
\end{multline}
One should note that $E$ is a closed subspace of $\mathcal{H}$, and this is the rigorous definition of the set $E$ introduced in \eqref{eq:schb-E}.

Next, we treat the issue for $\rho(\cdot, T)$. We following the approach in \cite{jing2024convergence} to resolve this. We consider the following 
\begin{equation}
    \bar{\rho}(\cdot, T) :=\lim_{\delta\to 0}\frac{1}{\delta}\int_{T-\delta}^T \rho_s\,ds.
\end{equation}
Clearly, if $t\mapsto \rho_t$ has a version that is continuous at $T$, $\bar{\rho}(\cdot, T)$ is well-defined.
With this, we define the functional
\begin{equation}
    G(\rho,\rho_1):=\left\{\begin{array}{lr}
    \ \mathbb{KL}\left[\bar{\rho}(x,T)\|\rho_1(x)\right], & \text { if } \bar{\rho}(\cdot, T)~\text{exists.}\\
    \ \infty , & \text { else. }
    \end{array}\right.
\end{equation}
$G(\rho,\rho_1)$ actually corresponds to the KL divergence term if we later focus on the feasible points, for which $\rho$ is actually continuous in time.

We now can define the functionals $F_\alpha$ and $F_\infty: \mathcal{H} \rightarrow \mathbb{R} \cup {\infty}$ corresponding to \eqref{eq:optm-E} and  \eqref{eq:schb-E} respectively as:
\begin{equation}\label{eq:functionals}
\begin{aligned}
F_\alpha(\rho, m)&=\int_{0}^{T} \mathscr{B}_2(\rho_t, m_t)\,dt +\alpha G(\rho,\rho_1),\\
F_\infty(\rho, m)&=\int_{0}^{T} \mathscr{B}_2(\rho_t, m_t)\,dt+ \chi_{E}^\infty.
\end{aligned}
\end{equation}
With Proposition \ref{prop:benamou}, optimization for the functional $F_{\alpha}$ and $F_{\infty}$ are then the rigorous definitions of the optimization problems \eqref{eq:optm-E} and \eqref{eq:schb-E}, respectively.
Similar as shown in \cite{jing2024convergence}, they are lower semi-continuous with respect to the topology considered.

\subsection{Regularity of feasible points}

It is well-known that the minimizer of $F_\infty$ is the solution of the Schr\"{o}dinger Bridge Problem, and the existence of solution to SBP is a well-established result, i.e. in \cite{christian_leonard_survey_2014}. Using the solutions, it is to construct a feasible point of $F_\alpha$.
Hence, the feasible points of $F_\alpha$ and $F_\infty$ actually exists. 

We show that the feasible points of $F_\alpha$ and $F_\infty$ have good time regularity properties by the following proposition.
\begin{proposition}\label{prop:c1}
If $(\rho, m)$ is a feasible solution of \eqref{eq:functionals} (for $F_\alpha$ or $F_\infty$), $m\ll \rho$ and the Radon-Nikodym derivative $u=\frac{dm}{d\rho}$ is in $L^1([0, T]; L^2(\rho_t))$, and there is a version of $\rho$ such that $t\mapsto \rho_t$ is continuous in $\mathcal{W}_2(\Omega)$, where $\mathcal{W}_{2}:=\{\mu \in \mathcal{P}(\Omega)|\int |x|^{2} \mu(dx)< \infty\}$.  Moreover, if the initial entropy is finite: $H(\rho_0):=\int \rho_0\log \rho_0\,dx<\infty$, then $\rho$ is absolutely continuous in $\mathcal{W}_2(\Omega)$.  
\end{proposition}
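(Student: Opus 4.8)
The plan is to read off the first two assertions directly from the finiteness of the functional, and then to establish the time regularity in two stages: narrow (hence $\mathcal{W}_2$) continuity in general, and absolute continuity once the entropy is controlled. Since $(\rho,m)$ is feasible, $\int_0^T \mathscr{B}_2(\rho_t,m_t)\,dt<\infty$, so by Proposition \ref{prop:benamou} we have $\rho_t\geq 0$ and $m_t\ll\rho_t$ for a.e.\ $t$, giving $m\ll\rho$; writing $u_t=dm_t/d\rho_t$, the same proposition yields $\int_0^T\|u_t\|_{L^2(\rho_t)}^2\,dt=2\int_0^T\mathscr{B}_2(\rho_t,m_t)\,dt<\infty$. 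Because $[0,T]$ is a finite interval, Cauchy--Schwarz upgrades this to $u\in L^1([0,T];L^2(\rho_t))$, which is the second claim.

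For the continuity claim I would exploit that $\Omega$ is bounded, so that narrow convergence of probability measures coincides with $\mathcal{W}_2$ convergence (second moments are uniformly bounded by $\operatorname{diam}(\Omega)^2$); it therefore suffices to produce a narrowly continuous representative. Fixing $\psi\in C^2(\bar\Omega)$ with $\partial_n\psi|_{\partial\Omega}=0$ and testing the weak formulation defining $\mathcal{H}$ against $\varphi(x,t)=\psi(x)\zeta(t)$, I obtain that $t\mapsto\int_\Omega\psi\,d\rho_t$ has distributional derivative $\int_\Omega\nabla\psi\cdot u_t\,d\rho_t+\sigma^2\int_\Omega\Delta\psi\,d\rho_t$, which is bounded by $\|\nabla\psi\|_\infty\|u_t\|_{L^2(\rho_t)}+\sigma^2\|\Delta\psi\|_\infty\in L^1([0,T])$ since $\rho_t$ is a probability measure and $u\in L^1([0,T];L^2(\rho_t))$. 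Hence $t\mapsto\int_\Omega\psi\,d\rho_t$ admits an absolutely continuous representative. Running this over a countable family of such $\psi$ that is dense in $C(\bar\Omega)$ and using tightness (all $\rho_t$ live on the compact set $\bar\Omega$), a standard diagonal/density argument produces a single representative that is narrowly continuous, hence $\mathcal{W}_2$-continuous.

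For the absolute-continuity claim I would bring in the entropy dissipation estimate. Formally differentiating $H(\rho_t)=\int\rho_t\log\rho_t\,dx$ along the constraint PDE and integrating by parts gives $\frac{d}{dt}H(\rho_t)=\int u_t\cdot\nabla\log\rho_t\,d\rho_t-\sigma^2 I(\rho_t)$, where $I(\rho_t)=\int|\nabla\log\rho_t|^2\,d\rho_t$ is the Fisher information. Young's inequality bounds the cross term by $\tfrac{1}{2\sigma^2}\|u_t\|_{L^2(\rho_t)}^2+\tfrac{\sigma^2}{2}I(\rho_t)$, so after integration
\begin{equation*}
H(\rho_t)+\tfrac{\sigma^2}{2}\int_0^t I(\rho_s)\,ds\leq H(\rho_0)+\tfrac{1}{2\sigma^2}\int_0^T\|u_s\|_{L^2(\rho_s)}^2\,ds.
\end{equation*}
Since $\Omega$ is bounded the entropy is bounded below (by $-|\Omega|/e$ via convexity of $s\log s$), so $H(\rho_0)<\infty$ together with the already-established finiteness of the kinetic energy forces $\int_0^T I(\rho_s)\,ds<\infty$. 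Consequently the effective velocity $v_t:=u_t-\sigma^2\nabla\log\rho_t$ satisfies $\int_0^T\|v_t\|_{L^2(\rho_t)}^2\,dt<\infty$, and the constraint rewrites as the continuity equation $\partial_t\rho+\nabla\cdot(\rho v)=0$. Invoking the standard characterization of absolutely continuous curves in Wasserstein space (Ambrosio--Gigli--Savar\'e), $\rho$ is then absolutely continuous in $\mathcal{W}_2(\Omega)$ with metric derivative dominated by $\|v_t\|_{L^2(\rho_t)}$.

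The main obstacle is making the entropy computation rigorous: the formal integration by parts presumes that each $\rho_t$ has a Sobolev density and that the Neumann boundary contributions vanish, neither of which is given a priori. I expect to handle this by spatial mollification, deriving the dissipation inequality for $\rho_t^\epsilon=\rho_t*\eta_\epsilon$ with constants uniform in $\epsilon$, and then passing to the limit using lower semicontinuity of both the entropy and the Fisher information under narrow convergence; the no-flux boundary condition built into $\mathcal{H}$ is exactly what should kill the boundary terms. A secondary technical point is ensuring that the representative obtained in the continuity step is the one along which the entropy estimate is run, which the $\mathcal{W}_2$-continuity from the first part renders unproblematic.
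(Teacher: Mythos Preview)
Your proposal is correct and follows essentially the same route as the paper's proof: finiteness of the Benamou--Brenier functional gives $m\ll\rho$ and $u\in L^2\subset L^1$ in time, testing the weak formulation against separated functions yields narrow (hence $\mathcal{W}_2$) continuity on the bounded domain, and an entropy-dissipation/Young's-inequality argument bounds the Fisher information so that the effective velocity $u_t-\sigma^2\nabla\log\rho_t$ is square-integrable and the AGS/Santambrogio characterization of absolutely continuous curves applies. The paper likewise flags that the entropy computation is only formal and appeals to mollification to make it rigorous, exactly as you anticipate.
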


\begin{proof}
If $(\rho, m)$ is a feasible solution, then
\begin{equation*}
    \int_0^T\mathscr{B}_2(\rho_t, m_t)\,dt<+\infty,
\end{equation*}
By Proposition \ref{prop:benamou}, $m\ll \rho$ for a.e. $t$ and 
$ u_t=\frac{dm_t}{d\rho_t}$
satisfies 
\begin{equation*}
\int_0^T \|u\|_{L^2(\rho_t)}^2\,dt=\int_0^T \|u_t\|^2\rho(dx) = 2\int_0^T \mathscr{B}_2(\rho_t, m_t)\,dt<+\infty.
\end{equation*}
so that $u\in L^1([0, T]; L^2(\rho_t))$.

Since $\Omega$ is bounded, the continuity in $\mathcal{W}_2$ is equivalent to the weak continuity, which follows from the fact that
$\rho$ is a weak solution to $\partial_t\rho+\nabla\cdot(\rho u)=0$ as it is in $\mathcal{H}$.
In fact, we take $\varphi(x, t)=\phi(x)h(t)$ such that $h(T)=0$, $\phi\in C_b^2$ and $\frac{\partial\phi}{\partial n}=0$, one has
\[
-\int_0^T h'(t) \int_{\Omega}\phi \rho_t(dx)\,dt-\int_0^Th(t)\int\nabla\phi \cdot u \rho(dx)\,dt
-h(0)\int\phi(x)\rho_0(dx)=\sigma^2\int_0^Th(t)\int \Delta\phi \rho(dx)\,dt.
\]
This holds for any $h\in C^1[0, T]$ with $h(T)=0$. This means that
$t\mapsto g(t):=\int \phi\rho_t(dx)$ has a weak derivative and the weak derivative is given by
\[
\frac{d}{dt}\int \phi\rho_t(dx)=\int \nabla\phi\cdot u \rho(dx)+\sigma^2\int\Delta\phi\rho(dx).
\]
Since $u\in L^2(0, T; L^2(\rho_t))$, the right hand side is integrable in $(0, T)$.
Hence, $t\mapsto g(t)$ is absolutely continuous in $(0, T)$. Moreover, 
by the equation above, one has $g(0+)=\int \phi(x)\rho_0(dx)$. 
This indicates that
\[
\int \phi\rho_{t+h}(dx)
\to \int \phi \rho_t(dx),
\quad h\to 0,
\]
for any $\phi\in C_b^2(\Omega)$. By a standard density argument, the test function can be generalized to $C_b$ class and thus $t\mapsto \rho_t$ is weakly continuous and thus continuous in $\mathcal{W}_2(\Omega)$, due to the boundedness of $\Omega$.


For the absolute continuity of $\rho$, recall
\begin{equation*}
f_t = u_t - \sigma^2\nabla\log\rho_t.
\end{equation*}
Then, $\rho_t$ satisfies the continuity equation with this equivalent drift $f_t$.
By the standard result in optimal transport theory \cite[Theorem 5.14]{santambrogio_benamou-brenier_2015},
if $\int_0^T\|f_t\|_{L^2(\rho_t)}^2\,dt<\infty$, then $\rho_t$ is absolutely continuous in $\mathcal{W}_2$. 
By the integrability of $u_t$, it suffices to verify $\int_0^T\|\nabla\log\rho_t\|_{L^2(\rho_t)}^2\,dt<\infty$.
Consider the entropy:
\begin{equation*}
H(\rho_t) = \int_\Omega\rho_t\log\rho_t\,dx.
\end{equation*}
The formal calculation implies that
\begin{multline*}
\frac{d}{dt}H(\rho_t) = \int_\Omega \rho_t v_t\cdot\nabla\log\rho_t\,dx  -\sigma^2\|\nabla\log\rho_t\|_{L^2(\rho_t)}^2 \\
\leq \|u_t\|_{L^2(\rho_t)}\|\nabla\log\rho_t\|_{L^2(\rho_t)}  -\sigma^2\|\nabla\log\rho_t\|_{L^2(\rho_t)}^2
\le C\|u_t\|_{L^2(\rho_t)}^2  -\frac{1}{2}\sigma^2\|\nabla\log\rho_t\|_{L^2(\rho_t)}^2.
\end{multline*}
On bounded domain, $H(\rho_t)$ is always bounded from below. Integrating over $[0, T]$ with $\int_0^T\|u_t\|_{L^2(\rho_t)}\,dt<+\infty$, combining the boundedness of $H(\rho_0)$, one has 
\[
\int_0^T\int_{\Omega}{\lvert\nabla\log\rho_t\rvert}^2\rho_t(dx)\,dt<+\infty.
\]
This calculation can be made rigorous by mollification and taking the mollification parameter to zero. 
 Hence $\int_0^T\|f_t\|_{L^2(\rho_t)}^2\,dt$ is bounded, and $\rho_t$ is absolutely continuous.
\end{proof}

According to this proposition, $\rho_t$ is continuous and thus 
\[
G(\rho, \rho_1)=\mathbb{KL}\left[\rho(x,T)\|\rho_1(x)\right].
\]
That means the functional we considered indeed is the one we desire.

\subsection{Convergence of the functionals}

We now establish the result concerning the $\Gamma$-convergence and the behavior of the minimizers.

\begin{theorem}
Assume that $F_\alpha$(resp. $F_\infty$) has at least one feasible point over $\mathcal{H}$, then $F_\alpha$(resp. $F_\infty$) has a unique global minimizer over $\mathcal{H}$. 
Moreover, $F_\alpha$ $\Gamma$-converges to $F_\infty$, and for any sequence $\alpha_I\to\infty$, the minimizers of $F_{\alpha_I}$ have a subsequence converging to the minimizers of $F_\infty$.
\end{theorem}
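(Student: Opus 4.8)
The plan is to establish the three assertions in order: existence and uniqueness of the minimizers, the $\Gamma$-convergence, and the convergence of minimizers. For existence I would apply the direct method. Both functionals are nonnegative and, by assumption, admit a feasible point of finite value, so a minimizing sequence $(\rho^n, m^n) \in \mathcal{H}$ exists with $\int_0^T \mathscr{B}_2(\rho^n_t, m^n_t)\,dt$ bounded. Since $\Omega$ is bounded the family $\{\rho^n\}$ is tight, and the energy bound together with Cauchy--Schwarz (exactly as in the proof of Proposition \ref{prop:c1}) gives a uniform total-variation bound on $m^n$; hence $(\rho^n, m^n)$ is precompact in the product weak topology. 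As $\mathcal{H}$ and $E$ are closed and $F_\alpha, F_\infty$ are lower semicontinuous, any weak cluster point is feasible and attains the infimum. For uniqueness I would exploit convexity: $\int_0^T \mathscr{B}_2\,dt$ is convex by Proposition \ref{prop:benamou}, the term $G(\cdot, \rho_1)$ equals the divergence $\mathbb{KL}[\rho(\cdot,T)\|\rho_1]$ on feasible points (by the regularity in Proposition \ref{prop:c1}) and is convex, and the feasible set is affine. Given two minimizers, their midpoint is feasible and again minimal, so every convexity inequality is an equality; strict convexity of the KL term then forces the two terminal marginals to coincide. With both marginals fixed, each minimizer also minimizes $\int_0^T \mathscr{B}_2\,dt$ under fixed endpoints, i.e. is a solution of one and the same Schr\"odinger bridge, which is unique by \cite{christian_leonard_survey_2014}; the same fixed-endpoint uniqueness applies directly to $F_\infty$.

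For the $\Gamma$-convergence I would invoke Proposition \ref{pro:gammaconv}. The family $F_\alpha$ is monotonically increasing in $\alpha$ since $G \ge 0$, and it converges pointwise to $F_\infty$: on $E$ one has $G = 0$, so $F_\alpha = \int_0^T \mathscr{B}_2\,dt = F_\infty$ for all $\alpha$, whereas off $E$ one has $G > 0$ (KL vanishes only at equality), so $\alpha G \to +\infty = \chi_E^\infty$. Thus $F_{\alpha_I} \uparrow F_\infty$ along any increasing sequence $\alpha_I \to \infty$, and since each $F_{\alpha_I}$ is lower semicontinuous, Proposition \ref{pro:gammaconv} yields $\Gamma$-convergence to $F_\infty$.

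For the minimizers I would combine a uniform energy bound with the proposition stating that cluster points of minimizers of a $\Gamma$-convergent sequence minimize the $\Gamma$-limit. Given a sequence $\alpha_I \to \infty$, I first pass to an increasing subsequence so that the previous paragraph applies. Fixing the Schr\"odinger-bridge solution $(\rho^*, m^*) \in E$, for which $F_{\alpha_I}(\rho^*, m^*) = \int_0^T \mathscr{B}_2(\rho^*_t, m^*_t)\,dt =: C_0$ is independent of $I$, minimality gives $\int_0^T \mathscr{B}_2((\rho_{\alpha_I})_t, (m_{\alpha_I})_t)\,dt \le C_0$ uniformly. This bound yields tightness of $\rho_{\alpha_I}$ and a uniform total-variation bound on $m_{\alpha_I}$ as in the existence step, so the minimizers are precompact and admit a weak cluster point, which by the cluster-point proposition minimizes $F_\infty$ and hence, by the uniqueness just proved, equals its unique minimizer; the corresponding subsequence therefore converges to it.

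The main obstacle I anticipate is uniqueness, because the Benamou--Brenier functional is only convex and not strictly convex --- its integrand is one-homogeneous and its Hessian degenerates along the scaling direction --- so strict convexity cannot be applied directly to the pair $(\rho, m)$. The decisive idea is the two-stage reduction: use strict convexity of the KL term to pin down the terminal marginal, then invoke classical uniqueness of the Schr\"odinger bridge with both marginals prescribed. A secondary point is verifying the compactness and lower-semicontinuity claims in the product weak topology, but these reproduce estimates already established around Proposition \ref{prop:c1} and should be routine given that groundwork.
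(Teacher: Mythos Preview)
Your proposal is correct and follows the paper's approach for $\Gamma$-convergence (via Proposition~\ref{pro:gammaconv}), existence (direct method: uniform bound on $\|m^n\|$ from the Benamou--Brenier energy via Cauchy--Schwarz, then weak compactness, then lower semicontinuity), and convergence of minimizers (uniform energy bound obtained by evaluating $F_{\alpha_I}$ at the SBP solution in $E$, then the cluster-point proposition).

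The one substantive difference is uniqueness. The paper disposes of it in a single line, asserting that $F_\alpha$ is strictly convex ``by the strict convexity of the Benamou--Brenier functional''. You correctly flag that this is imprecise: the integrand $f_2(t,x)=|x|^2/(2t)$ is one-homogeneous, hence $\mathscr{B}_2$ is convex but not strictly convex in $(\rho,m)$. Your two-stage remedy---use strict convexity of $\mu\mapsto \mathbb{KL}[\mu\|\rho_1]$ to force the terminal marginals of two putative minimizers to coincide, then invoke classical uniqueness of the Schr\"odinger bridge with both endpoints prescribed \cite{christian_leonard_survey_2014}---is a clean way to close the gap, and it handles $F_\infty$ directly since there both endpoints are already fixed. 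An alternative route, closer to what the paper presumably intends, is to observe that equality in the convexity inequality for $f_2$ forces $(\rho^1,m^1)$ and $(\rho^2,m^2)$ to be pointwise proportional, hence $u^1=u^2$ on the common support, after which uniqueness for the Fokker--Planck equation with the same drift and the same initial datum $\rho_0$ gives $\rho^1=\rho^2$; but the paper does not spell this out either. Your argument is the more self-contained of the two.
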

\begin{proof}
Since $F_\alpha$ is lower-semicontinuous (by Proposition~\ref{prop:benamou}), and $F_\alpha\uparrow F_\infty$ pointwisely, one concludes that by Proposition \ref{pro:gammaconv} that $F_\alpha$ $\Gamma$-converges to $F_\infty$. 

By Proposition \ref{prop:benamou}, we know that $\mathscr{B}_2(\rho, m)\geq 0$, hence $F_\alpha$ and $F_\infty$ are bounded from below. Then there exists $F_\alpha^*\in[0, +\infty)$, s.t. $F_\alpha^*=\inf_{(\rho, m)\in\mathcal{H}} F_\alpha(\rho, m)$.

Consider a feasible minimizing sequence $(\rho_n, m_n)$ such that $F_\alpha(\rho_n, m_n)\to F_\alpha^*$. Then by the H\"{o}lder inequality, we have
\begin{multline}
\|m_n\| = \int_0^T\int_{\mathbb{R}^d}|m_n(dx, t)|\,dt\leq{\left(2\int_0^T\int_{\mathbb{R}^d}\rho_n(dx) dt\int_0^T\mathscr{B}_2(\rho_n, m_n) dt\right)}^{1/2}\\
=\sqrt{2T}{\left(\int_0^T\mathscr{B}_2(\rho_n, m_n) dt\right)}^{1/2}\leq\sqrt{2T F_\alpha(\rho_n, m_n)},
\end{multline}
which implies that $\sup_n\|m_n\|<+\infty$, and further $\sup_n(\|m_n\|+\|\rho_n\|)<+\infty$. According to the Banach-Alaoglu theorem, there exists a subsequence $(\rho_{n_k}, m_{n_k})$ converging weakly to $(\rho^*, m^*)$ in $\mathcal{H}$. Together with the lower-semicontinuity of $F_\alpha$, we have
\begin{equation}
F_\alpha^*\leq F_\alpha(\rho^*, m^*)\leq\liminf_{k\to\infty}F_\alpha(\rho_{n_k}, m_{n_k})=F_\alpha^*.
\end{equation}
Hence the minimizer of $F_\alpha$ exists. Similarly, the minimizer of $F_\infty$ exists.

Moreover, since $F_{\alpha}$ is strictly convex (by the strict convexity of the Benamou-Brenier functional), the minimizer must be unique.

Let $(\rho^\alpha, m^\alpha)$ be the minimizer of $F_\alpha$, and $(\rho^\infty, m^\infty)$ be the minimizer of $F_\infty$. It's clear that
\begin{equation}
F_\alpha(\rho^\alpha, m^\alpha)\leq F_\alpha(\rho^\infty, m^\infty)\leq F_\infty(\rho^\infty, m^\infty),
\end{equation}
then
\begin{equation}
\sup_\alpha(\|m^\alpha\|+\|\rho^\alpha\|)\leq \sqrt{2TF_\infty(\rho^\infty, m^\infty)}+T<+\infty.
\end{equation}
Again by the Banach-Alaoglu theorem, there exists a subsequence $(\rho^{\alpha_k}, m^{\alpha_k})$ converging weakly to $(\rho, m)$ in $\mathcal{H}$. Since $F_\alpha$ $\Gamma$-converges to $F_\infty$, $(\rho, m)$ is a minimizer of $F_\infty$.

\end{proof}

Lastly, we comment that the original problem requires $\rho\in \mathcal{P}(C([0, T];\Omega))$, while we seek $\rho$ in a much larger space. However, due to the uniqueness of the minimizer, if the original Schr\"odinger bridge problem has a solution, it must be the solution in our framework. Hence, the mathematical formulation here is fine.

\section{Numerical experiments}\label{sec:numericalexp}
In this section, we consider some numerical examples to demonstrate our deep learning framework for solving the SBP. Our code is available at \href{https://github.com/sharkjingyang/SchB}{https://github.com/sharkjingyang/SchB}. In low dimensional experiments, we simply adopted the MLP framework, since this architecture already possesses sufficient approximation capability to represent the hypothetical velocity field in low dimensional cases. High order adaptive ODE solver is employed and an implementation of the adjoint method is utilized for back-propagation. Meanwhile we ensure tolerance is set low enough so numerical error from ODE solver can be negligible.

\subsection*{Experiment setup}
    
    \noindent \textbf{Hyper-parameters.} In all experiments, we consider the time interval as $[0,1]$, i.e. $T=1$. We take $\alpha=10$ as a default choice to make a balance for terms in the cost function.
    \\
    \noindent \textbf{Architectures} We use sine and cosine functions for time embedding, which are then processed through an MLP. The resulting embeddings are concatenated with the input data. The network architecture consists of three ConcatSquash linear layers with tanh activation functions.
    \\
    \noindent \textbf{Optimization.} For stochastic gradient descent, we choose Adam \cite{kingma2014adam} with a learning rate of 1e-3.

\subsection{1D Gaussian mixture}
As a first example, we employ the 1-D Gaussian mixture problem to evaluate the performance of our algorithm in solving the SBP, including verifying convergence properties and observing typical behaviors. We sample from the Gaussian mixture
\begin{gather}
\rho_0(x)= \frac{1}{2}\cdot\frac{1}{\sqrt{2\pi}}e^{-(x+3)^2/2}+\frac{1}{2}\cdot\frac{1}{\sqrt{2\pi}}e^{-(x-3)^2/2}
\end{gather}
and use our framework to learn the SB dynamics from $\rho_0$ to standard normal distribution $\rho_1$. Figure \ref{rho_0_1d} illustrates the forward process from a Gaussian mixture to a normal distribution, presented in histogram form. With a pre-trained hypothetical velocity field $f_{\theta}$, one can sample trajectories and employ score matching algorithms to approximate the score function. As a result, the optimal drift can be recovered as well. Figure \ref{plot:ODE_sample&SDE_sampler} shows ODE trajectories and SDE paths between two marginal distributions. 

\begin{figure}[htbp]
\includegraphics[width=13cm]{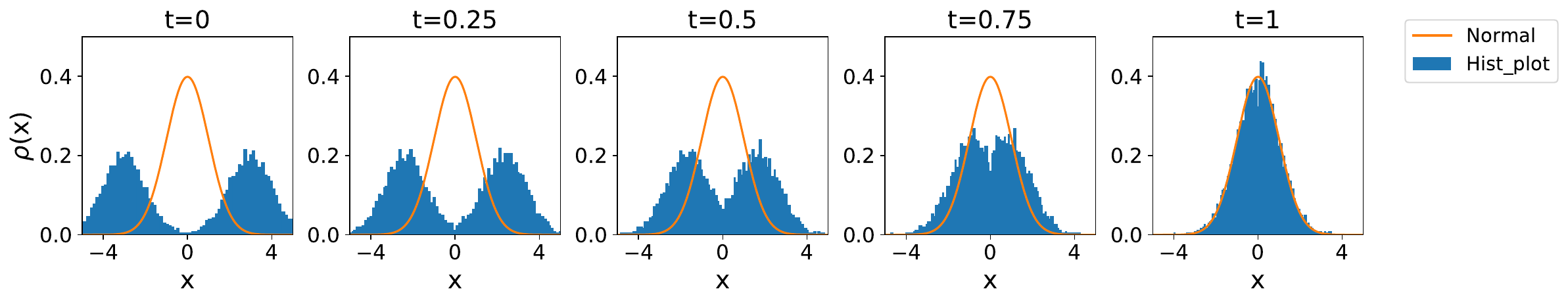}
\centering
\caption{Evolution of particle distribution transitioning from Gaussian mixture to Standard Gaussian.
}
\label{rho_0_1d}
\end{figure}

\begin{figure}[htbp]
\vspace{-0.5cm}
\centering
\subfigure[ODE trajectories for $dX= f_{\theta} dt$]{
\begin{minipage}[t]{0.48\linewidth}
\centering
\includegraphics[width=6.7cm]{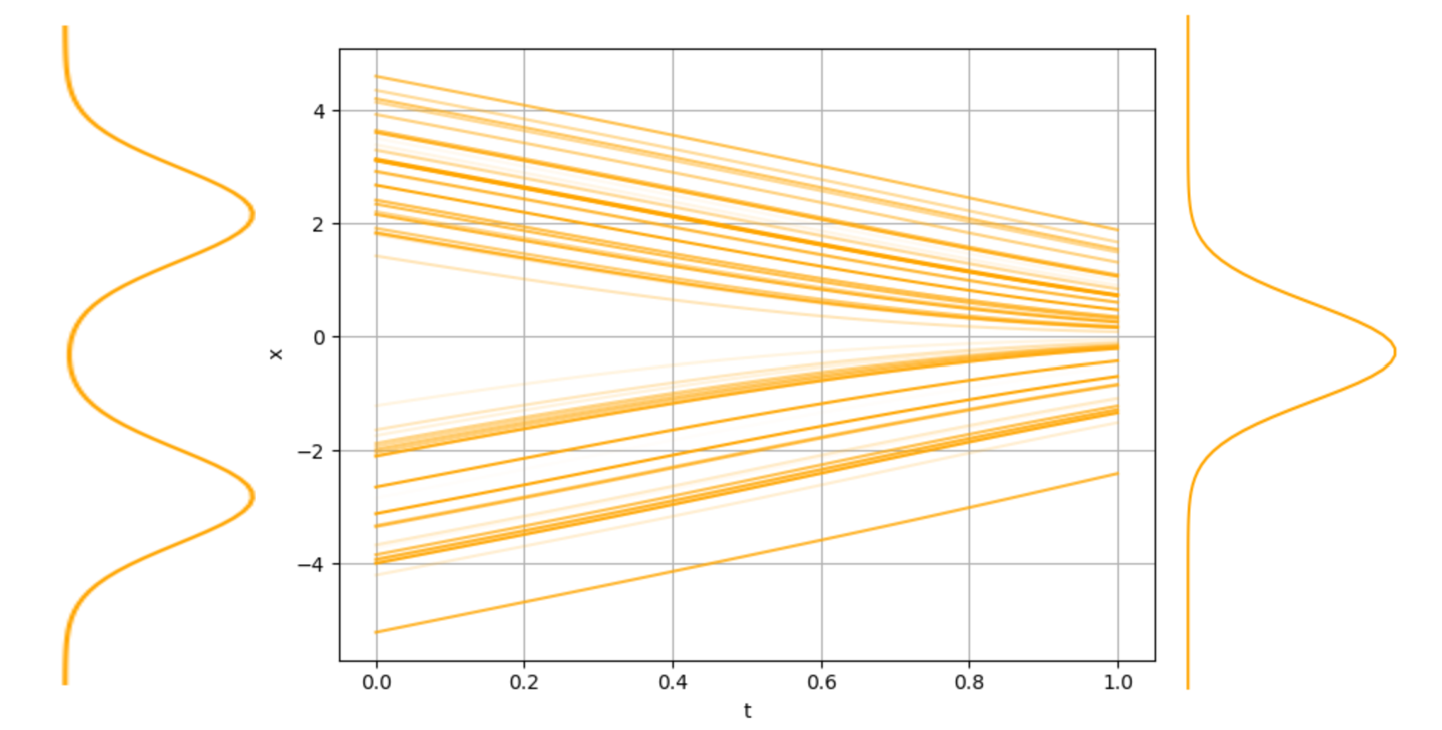}
\end{minipage}
}
\subfigure[SDE paths for $ dX= v dt + \sqrt{2} dW$]{
\begin{minipage}[t]{0.48\linewidth}
\centering
\includegraphics[width=6.7cm]{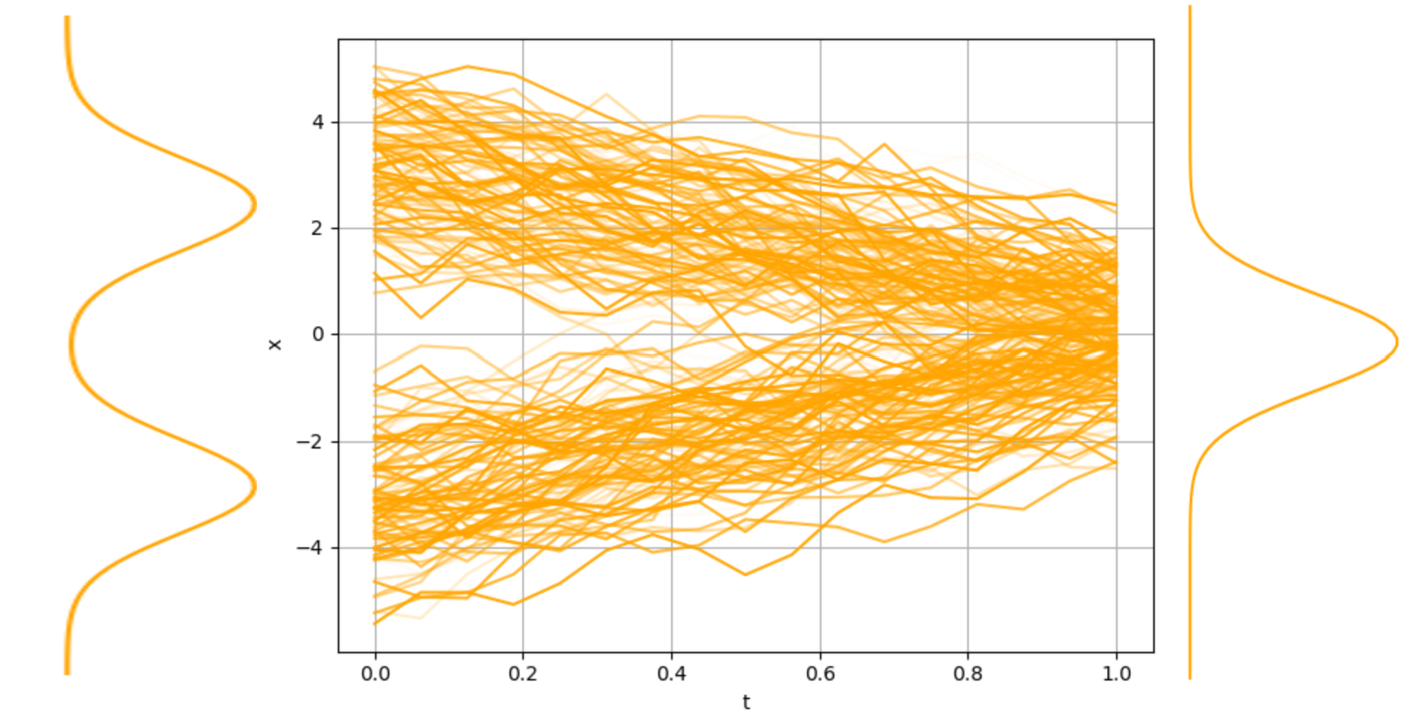}
\end{minipage}%
}
\centering
\caption{ODE interpolation trajectories  and SDE interpolation paths.}
\label{plot:ODE_sample&SDE_sampler}
\end{figure}


We also compare our results with a traditional optimization method. We employ a similar numerical scheme described in \cite{jing2024machine} to transform the optimization problem into discrete form and use a primal-dual hybrid algorithm to solve the SBP on the grid. The details of numerical implementation of the primal-dual algorithm can be found in Appendix \ref{Appendix:primal_dual}. In the 1D case, the minimizer of the discrete SBP optimization problem serves as a reference solution to verify the accuracy of our algorithm. Figure \ref{plot:score} displays the score function $s_{\phi}$ obtained through score matching, compared to the reference score, which is computed from the solutions of the discrete optimization problem \eqref{eq:discrete_minmax} using finite differences. The consistency of the score functions suggests that our algorithm provides a good approximation of the time marginals, as the trajectories used for training score matching algorithms are generated by the well-trained hypothetical velocity field $f_{\theta}$. Figure \ref{plot:compare_PD_v} shows the SBP's optimal drift recovered after score matching as suggested in section \ref{subsection:recovering_drift} and the reference drift. It can be observed that the two drifts match well in the high-density region, particularly within $\left[-3,3\right]$. The reference drift exhibits  oscillations near the boundary due to the boundary conditions imposed in the discrete Schr\"{o}dinger optimization problem, whereas our SBP-solving algorithm via normalizing flows does not strictly enforce a bounded domain or mass conservation.

We plot the hypothetical velocity field $f$ with different noise scale $\sigma$ solved by our algorithm, and the velocity field from optimal transport as computed by OT-Flow as show in Figure \ref{plot:compare_OT_v_drift}. A higher noise scale corresponds to a larger velocity in the initial stage, which aligns with intuitive understanding. In our 1D experiments, the hypothetical velocity field $f$ in SBP is quite similar compared with the optimal velocity field in OT. A plausible explanation is that the SBP can be interpreted as a regularized entropy optimal transport problem. As the noise scale diminishes to zero, the minimizers of SBP are anticipated to converge to those of optimal transport (OT). 

\begin{figure}[htbp]
\includegraphics[width=13cm]{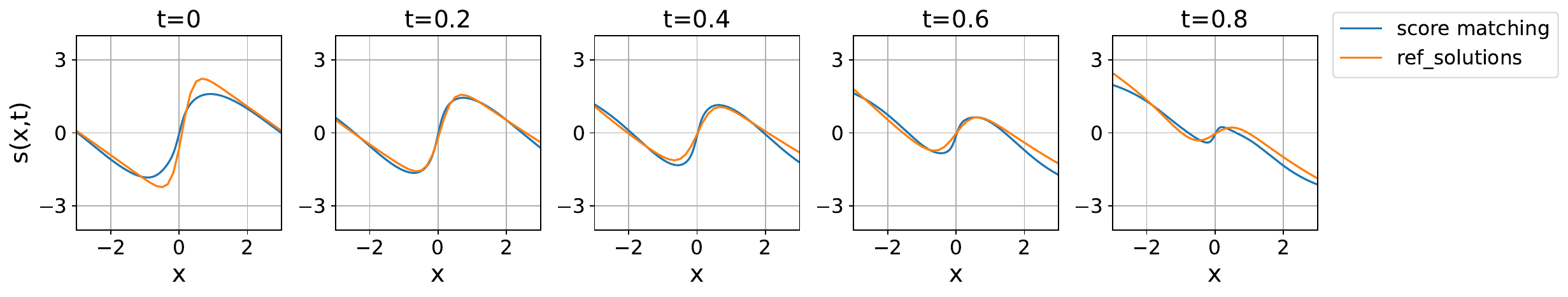}
\centering
\caption{Comparison of score function between score matching solutions and reference ones.
}
\label{plot:score}
\end{figure}

\begin{figure}[htbp]
\includegraphics[width=13cm]{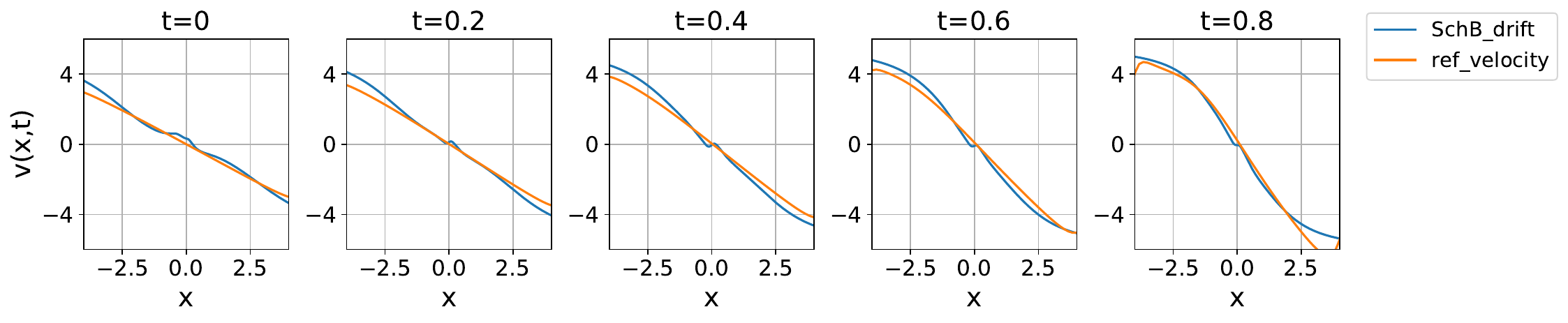}
\centering
\caption{Comparison of drift of solutions between our algorithm and tradition primal dual method.
}
\label{plot:compare_PD_v}
\end{figure}

\begin{figure}[htbp]
\includegraphics[width=13cm]{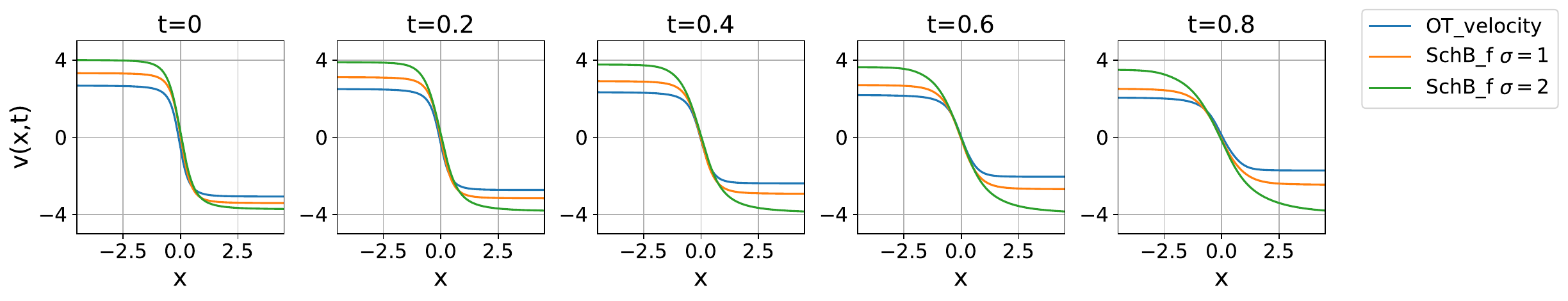}
\centering
\caption{Comparison of SBP hypothetical velocity field with different $\sigma$ and the velocity field of optimal transport problem. 
}
\label{plot:compare_OT_v_drift}
\end{figure}

During training, we set a relatively small $\alpha$ as suggested by the $\Gamma$-convergence analysis. In this small $\alpha$ regime, as the training progresses, one can expect $\mathbb{KL} [\rho(x,T)||\rho_1(x)] \approx 0$. To verify the convergence, we plot the curve of the optimization target for different values of $\alpha$ in Figure \ref{plot:gamma_convergence}. Each point on the curve represents the average value obtained from five independent experiments, where the settings are the same except for the value of $\alpha$. As $\alpha$ increases, we observe that $\mathcal{J}_{B}$ has a slower increasing rate, which is expected to eventually converge the minimum of the SBP. The optimization target computed by the traditional optimization method is about 3.493.


\subsection{2-D toy problems} 

In this subsection, we visualize the generative performance on classical 2-D synthetic examples to evaluate the potential of our algorithm as a generative model. We use trained hypothetical velocity field, or alternatively SB drift with diffusion process to simulate an inverse transformation (sampling from 2-D standard normal distribution and pushing particles back to the data distribution), from which one can compare the similarity of original data with generative distribution. The high similarity indicates that our model can generate samples to approximate $\rho_0$ with satisfactory accuracy even though $\rho_0$ has separate supports. Numerical results are illustrated in Figure \ref{plot:2d_moon_8g_checker}, which demonstrates the potential of our model to function as a generative model.

\begin{figure}[htbp]
    \vspace{-0.5cm}
    \centering
    \begin{minipage}{0.43\textwidth}
        \centering
        \includegraphics[width=\linewidth]{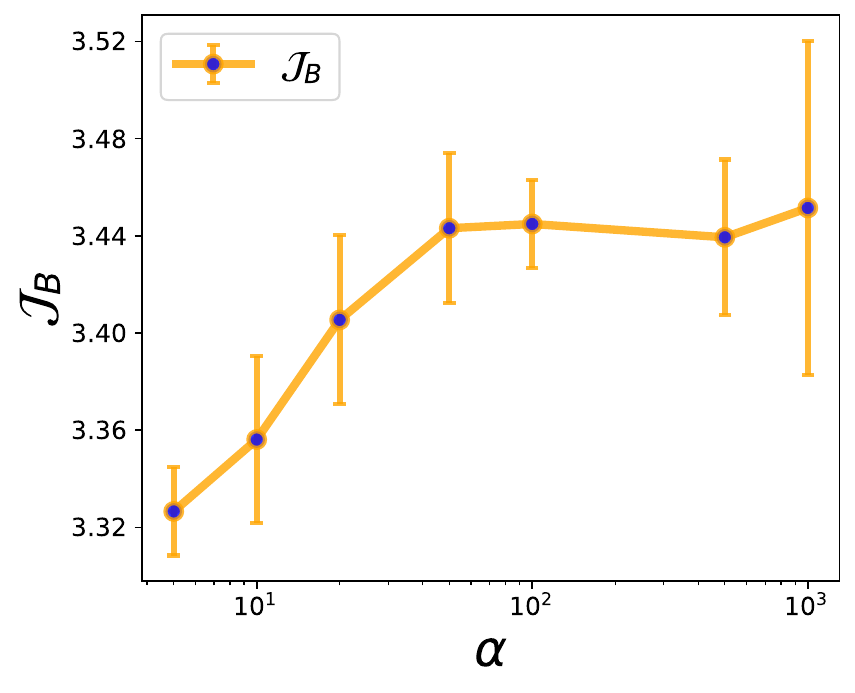}
        \caption{$\mathcal{J}_{B}$ with different $\alpha$}
        \label{plot:gamma_convergence}
    \end{minipage}
    \hfill
    \begin{minipage}{0.55\textwidth}
        \centering
        \includegraphics[width=\linewidth]{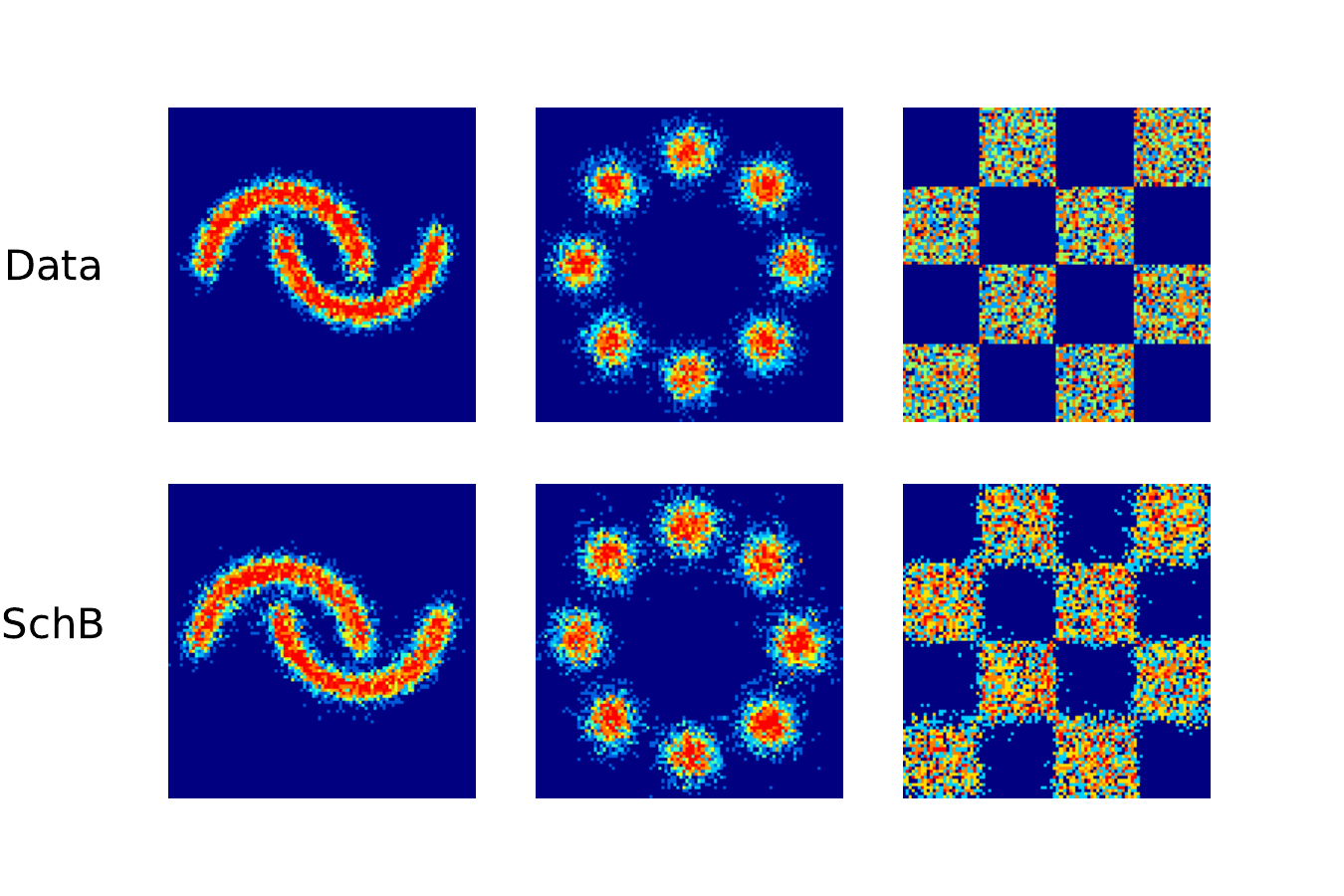}
        \caption{Comparison of data and generated samples}
        \label{plot:2d_moon_8g_checker}
    \end{minipage}
\end{figure}

The top two rows of Figure \ref{plot:2d_CNFs_vs_SchB} compare the forward and generation processes of the SchB, demonstrating the invertibility of the mappings and the stability of the numerical solvers. The third row presents the generation process of classical CNFs. In comparing the generation processes of SchB and classical CNFs, we observe that classical CNFs do not follow a 'straight' path, whereas the SchB corresponds to deblurring in the later stages of generation.

\begin{figure}[htbp]
\includegraphics[width=12cm]{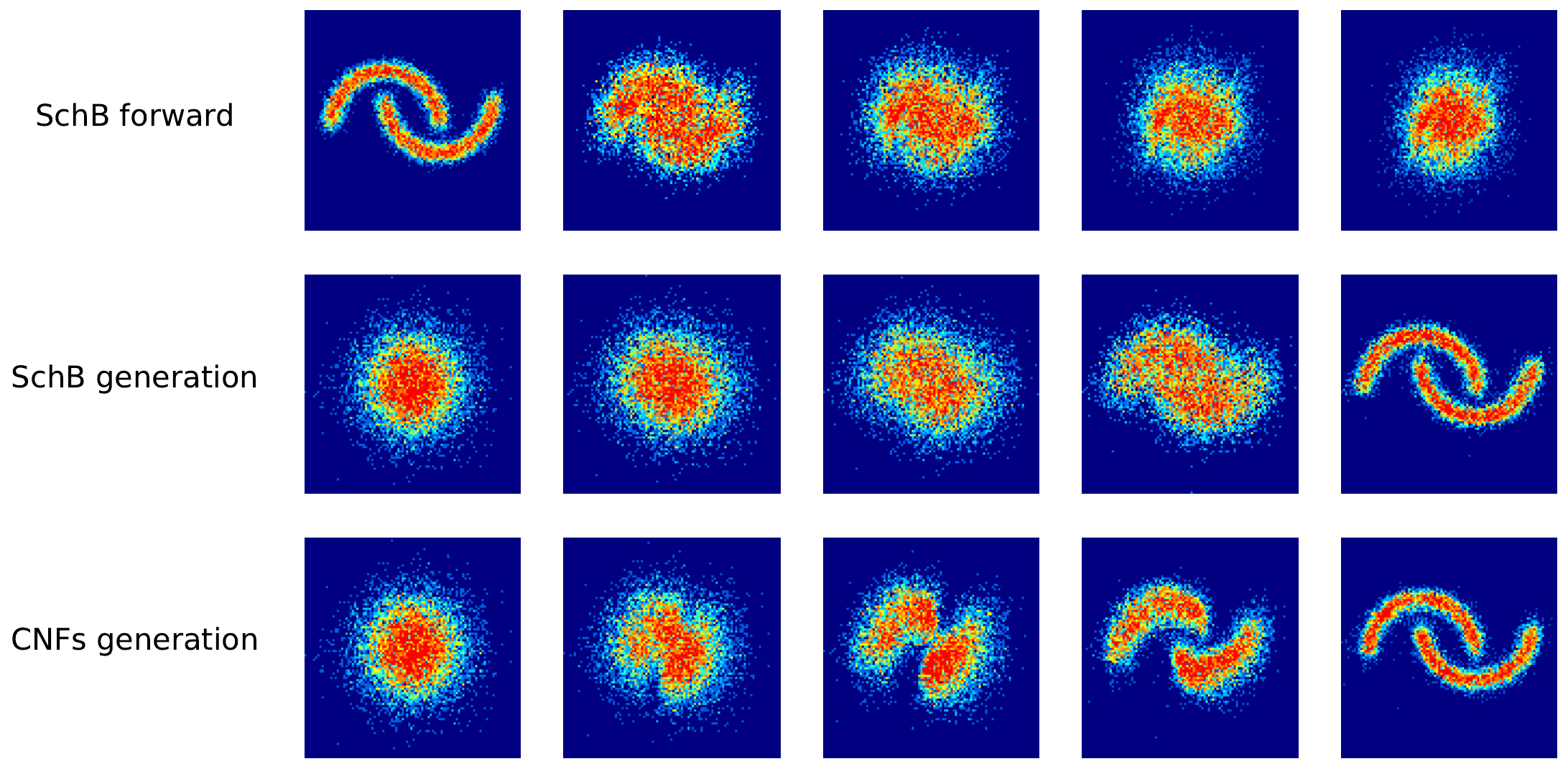}
\centering
\caption{Top: Forward process from data of moons to the normal distribution. Middle: Generation process of SchB from Gaussian samples to new data samples. Bottom: Generation process of classical CNFs from Gaussian samples to new data samples.}
\label{plot:2d_CNFs_vs_SchB}
\end{figure}

\subsection{2D Double Well Experiment}\label{sec:SOC}
In this section, we demonstrate our algorithm by solving the optimal control problem of transitioning from one minimizer to another, a typical application in the field of chemical reactions.Considering the double well potential, we demonstrate how to incorporate a functional prior of any form and learn the distribution of paths connecting $\rho_0$ and $\rho_1$. Similar experiments have been researched in \cite{gao2023optimal,vargas2021solving} . To incorporate prior knowledge, we use a potential field with two local minima, and the boundary distributions $\rho_0$ and $\rho_1$ are modeled as Gaussian distributions centered at the wells. 

The problem is equivalent to find the optimal control $u$ such that the diffusion process $dX_t= \left[u(X_t,t)-\nabla U(X_t) \right]dt + \sqrt{2} dW$ bridges the distributions $\rho_0$ and $\rho_1$ and minimizes the $L^2$ cost:
\begin{equation}\label{problem:SOC_exp}
\begin{array}{ll}
    \min\limits_{\rho, u} & \frac{1}{2}\int_{0}^{T} \int_{\Omega}\rho(x,t) |u(x,t)|^2 dx dt, \\
\text { s.t. } &\partial_{t} \rho +\nabla \cdot(\rho (u-\nabla U))= \sigma^2 \Delta \rho ,\\
&\rho(x,0)=\rho_{0}(x) ,\ \rho(x,1)=\rho_{1}(x) \\
\end{array}
\end{equation}

The aim of this experiment is to show that, with this prior, the SBP results in low-energy trajectories (as determined by the well's potential function) for particle configurations sampled from the wells. Intuitively, the learned paths are expected to avoid the high-energy peak at $x = (0, 0)$ and instead follow the "passes" on either side. Notably, if we were to estimate the optimal transport geodesics between $\rho_0$ and $\rho_1$ or use a Brownian motion prior, the learned trajectories would pass through the center, which corresponds to the highest energy path between the wells.

The algorithm remains nearly identical. We parameterized $f_{\theta}:= u-\sigma^2 \nabla \log \rho$ as a neural network and denotes $F(x,t):=f_{\theta}(x,t)-\nabla U(x)$. Consequently, the corresponding ODE system becomes:
\begin{equation}\label{eq:2D_doublewell_ODE}
\partial_{t}\left[\begin{array}{c}
z \\
\ell \\
s
\end{array}\right]=\left[\begin{array}{c}
F \\
\nabla\cdot F\\
-\nabla (\nabla \cdot F )- \mathcal{J}_{F}^{T} s
\end{array}\right], \quad\left[\begin{array}{c}
z(x, 0) \\
\ell(x,0)\\
s(x, T)
\end{array}\right]=\left[\begin{array}{c}
x \\
0\\
\nabla \log \rho_1(z(x,T))
\end{array}\right].
\end{equation}
The loss function remains unchanged. The parameterized function $f_{\theta}$ can be trained by minimizing this loss function and the ODE trajectories can be recovered by solving
\begin{equation*}
    dx_t=(f_{\theta}(x_t,t)-\nabla U(x_t))dt.
\end{equation*}
Additionally the optimal control can be recovered using score matching as described in Section \ref{subsection:recovering_drift}, enabling the generation of SDE paths for particles.

\begin{algorithm}
\caption{Solving 2D double well problems}
\begin{algorithmic}[1]
\Require Data samples $\{ x_{i}\}_{i=1}^{N}$, time interval $[0,T]$, initializing network $f_{\theta}$
\For{number of training iterations}
    \State Solving ODE system \eqref{eq:2D_doublewell_ODE} with $f_{\theta}$ to obtain $z(x_i,T)$, $\ell(x_i,T)$ and $s(x_i,t)$.
    \State Calculate cost function $\mathcal{J}=\alpha \mathcal{J}_{\mathbb{KL}}+ \mathcal{J}_{B} $
    \State Use ADAM optimizer optimizer to update network parameter of $f_{\theta}$
\EndFor
\State Use Algorithm \ref{alg:Drift_recovering} to recover the optimal control $u$ from $f_{\theta}$.
\end{algorithmic}
\end{algorithm}

Especially the potential and the marginal distributions in our experiment are set to be
\begin{equation}
    \begin{aligned}
        &U(x,y) = 0.3\left(\frac{1}{2}(x^2-1)^2+y^2 + 8\exp\left(\frac{-((x-a)^2+y^2)}{32}\right)\right) \\
        &\rho_0 = \mathcal{N}\left(\begin{bmatrix}-1\\0\end{bmatrix}, \begin{bmatrix}
            0.0125&0\\0&0.15
            \end{bmatrix}\right), \quad  \rho_1 = \mathcal{N}\left(\begin{bmatrix}
                1\\0
            \end{bmatrix}\begin{bmatrix}
                0.0125&0\\0&0.15
            \end{bmatrix}\right),
    \end{aligned}
\end{equation}
where $a$ is a constant controlling the potential barrier in the middle. The learned control vector field for the SDE at different time is shown in Figure~\ref{fig:velocity_fields}.The white lines indicate the contour lines (level curves) of the potential function, the black arrows denote the directions of the control vectors, and the background color represents the magnitude of the control vectors. We can observe that at $t=0$ the control vector in the left region is opposite to the gradient of the given potential, driving particles away from the left local minima. At $t=1$ the control vector in the right region aligns with the gradient of the potential, guiding particles toward the target local minima. Figure~\ref{fig:double-well} shows the ODE and SDE paths for different $a$. For $a=0.5$ we can observe that the path of particles bypassing the barrier. The case where $a=0$ presents an intriguing phenomenon: despite the presence of a symmetric potential field, the trained network consistently produces asymmetric results. Note that, however, the probability that the trained velocity field chooses the upper half region or lower half region is symmetric.This behavior suggests the presence of implicit regularization mechanisms within the neural network architecture. It is unclear at this point whether the original system has the physical symmetry breaking. Figure~\ref{fig:double-well-terminal} shows the terminal distribution of preset data and sampling from trained network. The terminal distributions matche the target distribution in both $a=0.5$ and $a=0$ cases, demonstrating that our algorithm can always find connecting paths that avoid high-energy peak and enable transitions between the minimizers

\begin{figure}
\centering
\begin{minipage}{\textwidth}
\centering
\includegraphics[width=\textwidth]{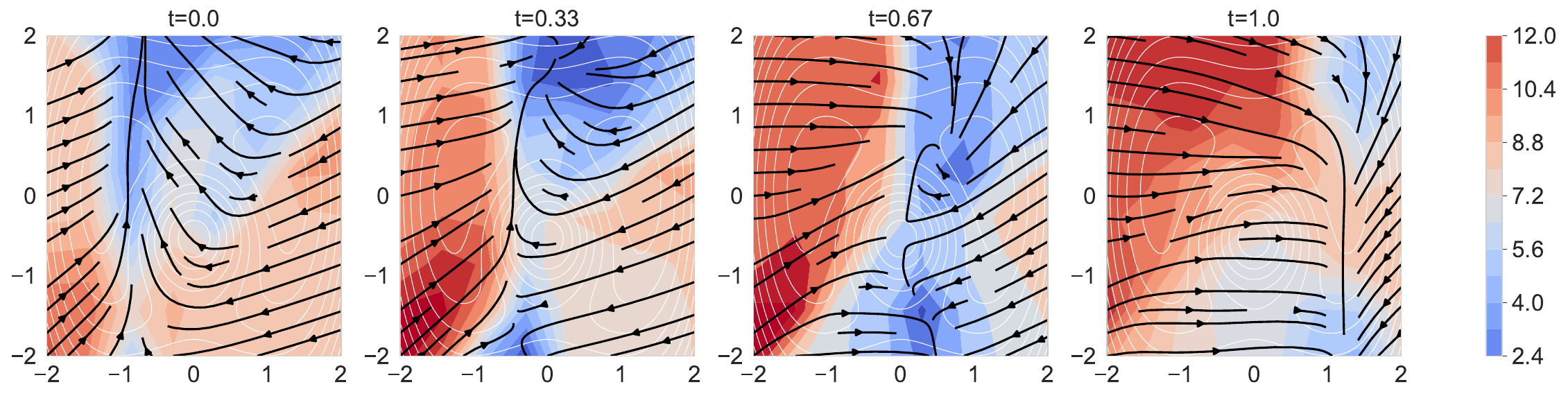}
\end{minipage}
\vspace{0.5cm}  
\begin{minipage}{\textwidth}
\centering
\includegraphics[width=\textwidth]{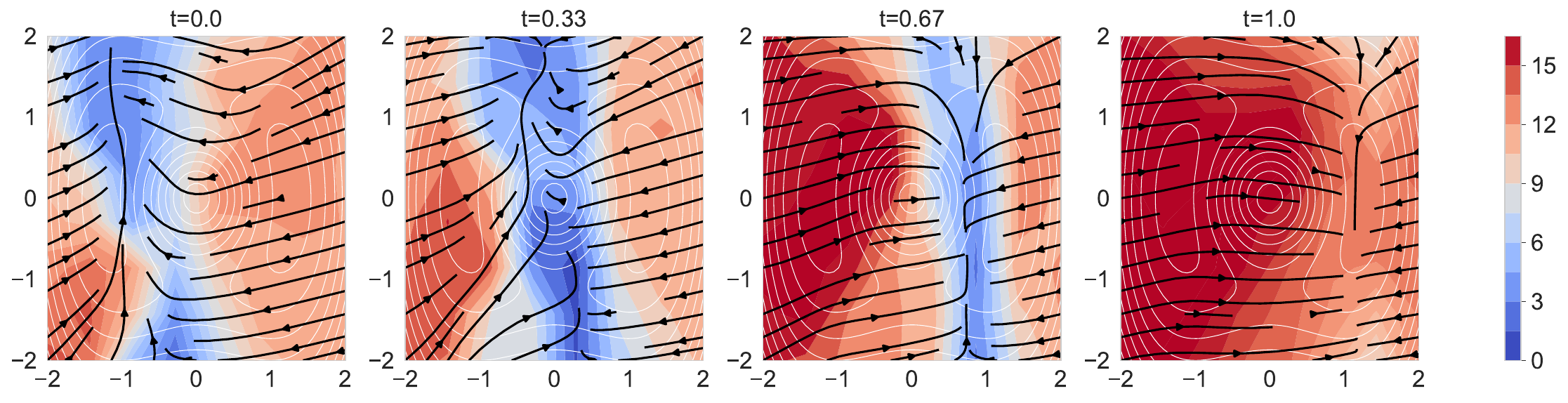}
\end{minipage}
\caption{Optimal control $u$ for different values of $a$. The white lines represent the contour lines of the potential function, the black arrows represent the directions of the control vectors, and the background color represents the magnitude of the control vectors. Upper: $a=0.5$. Bottom: $a=0$}
\label{fig:velocity_fields}
\end{figure}

\begin{figure}[htbp]
\centering
\subfigure[ODE trajectories for $a=0.5$]{%
\begin{minipage}[t]{0.35\linewidth}
\centering
\includegraphics[width=\linewidth]{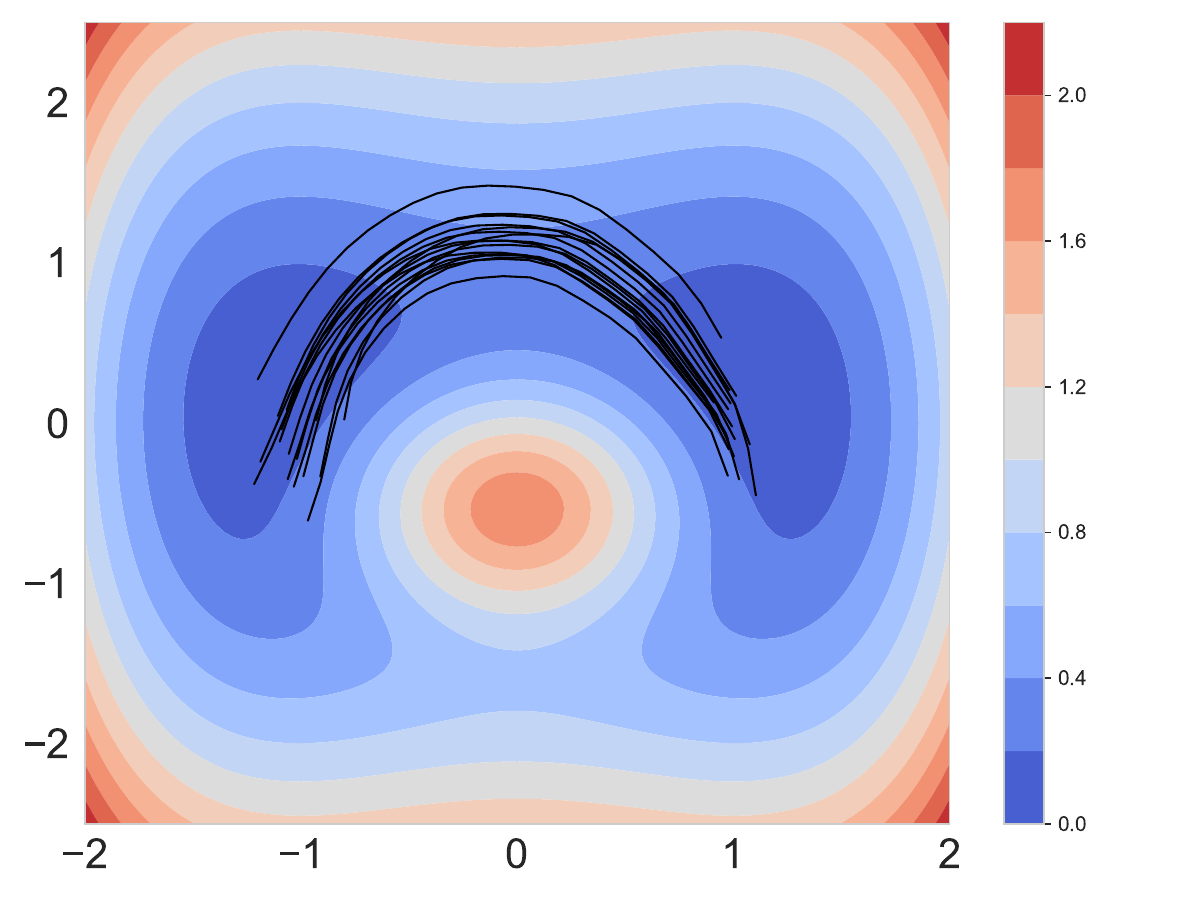}
\end{minipage}}%
\hspace{0.01\linewidth}
\subfigure[SDE paths for $a=0.5$]{%
\begin{minipage}[t]{0.35\linewidth}
\centering
\includegraphics[width=\linewidth]{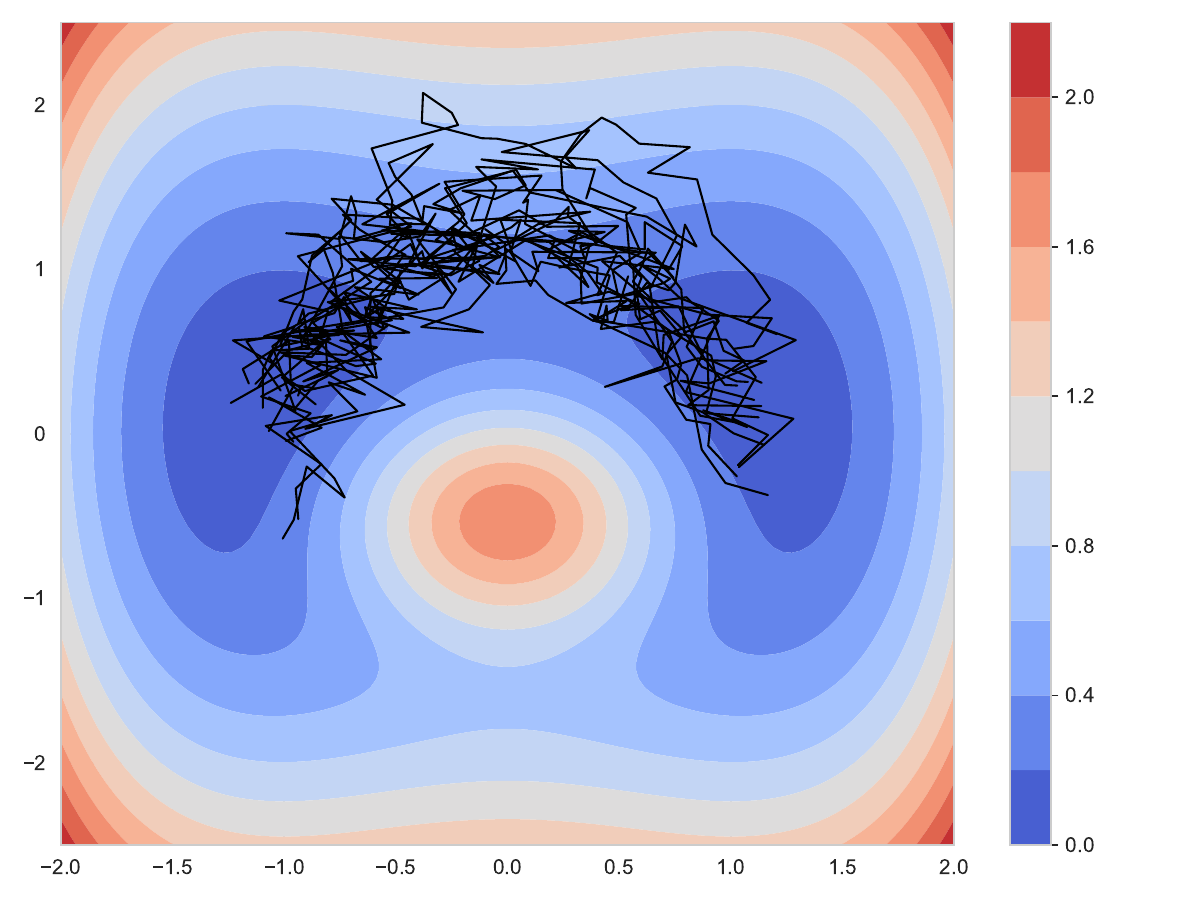}
\end{minipage}}

\subfigure[ODE trajectories for $a=0$]{%
\begin{minipage}[t]{0.35\linewidth}
\centering
\includegraphics[width=\linewidth]{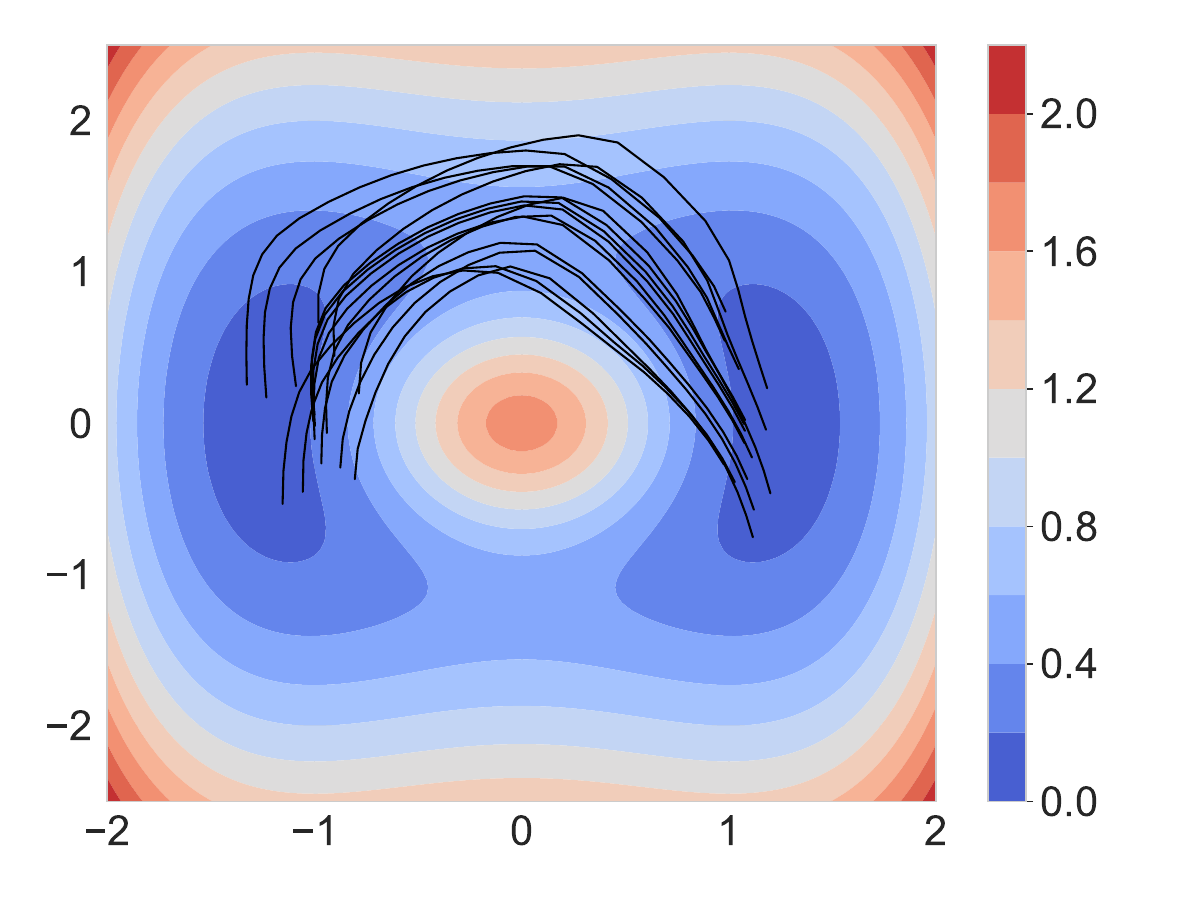}
\end{minipage}}%
\hspace{0.01\linewidth}
\subfigure[SDE paths for $a=0$]{%
\begin{minipage}[t]{0.35\linewidth}
\centering
\includegraphics[width=\linewidth]{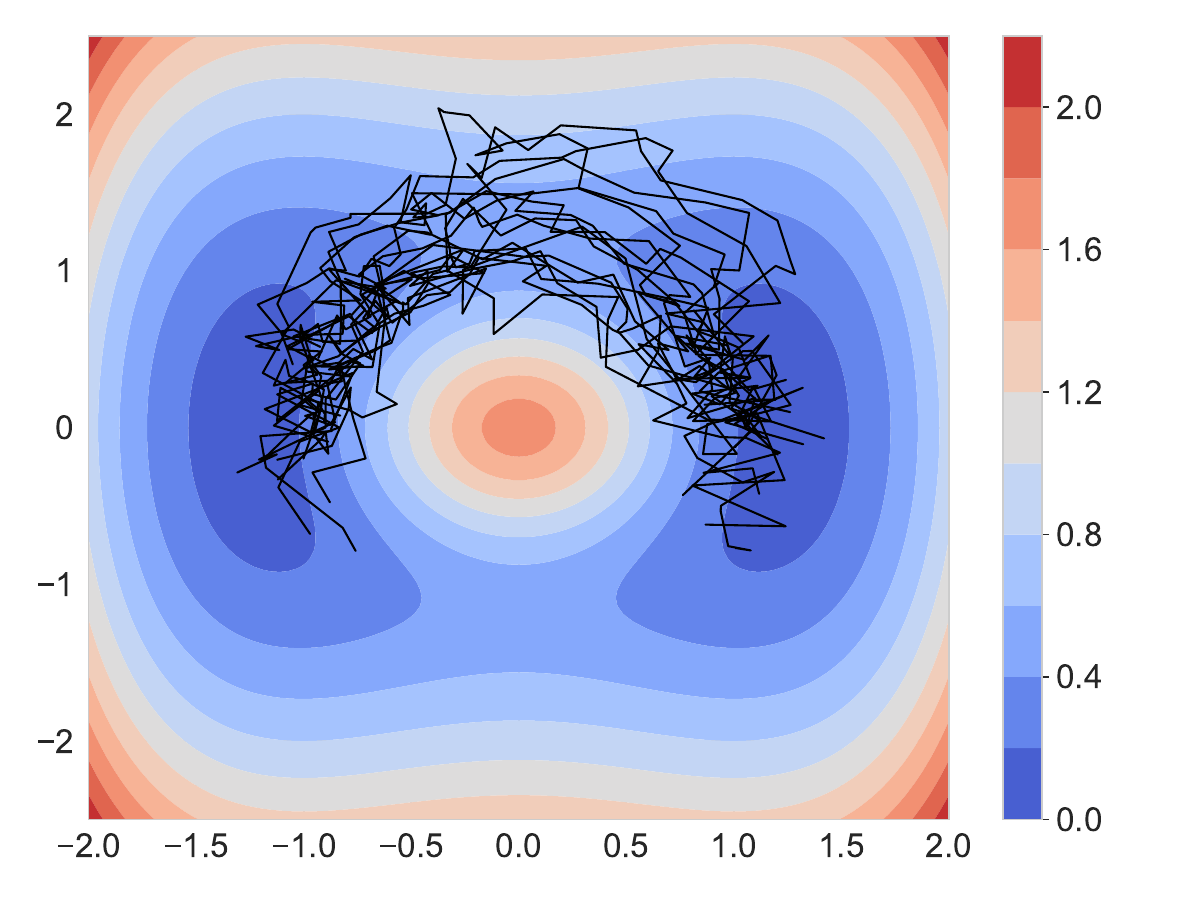}
\end{minipage}}

\caption{ODE trajectories and SDE interpolation paths for standard and symmetric breaking cases.}
\label{fig:double-well}
\end{figure}

\begin{figure}[htbp]
\centering
\subfigure[Target distribution $\rho_1$]{
\begin{minipage}[t]{0.3\linewidth}
\centering
\includegraphics[width=\textwidth]{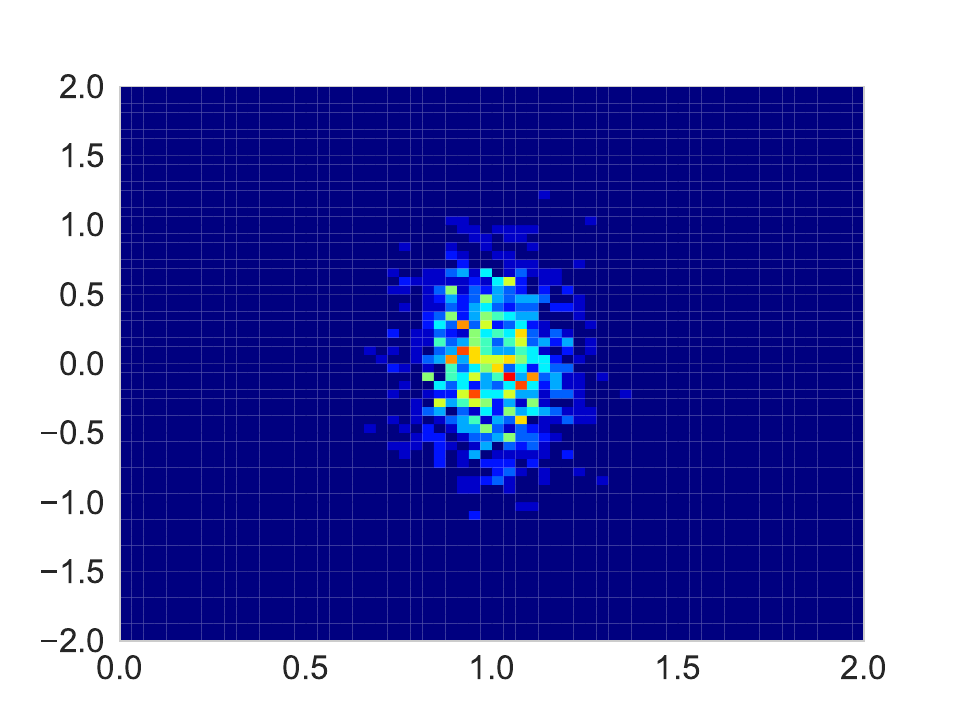}
\end{minipage}
}
\subfigure[Sampling for $a=0.5$]{
\begin{minipage}[t]{0.3\linewidth}
\centering
\includegraphics[width=\textwidth]{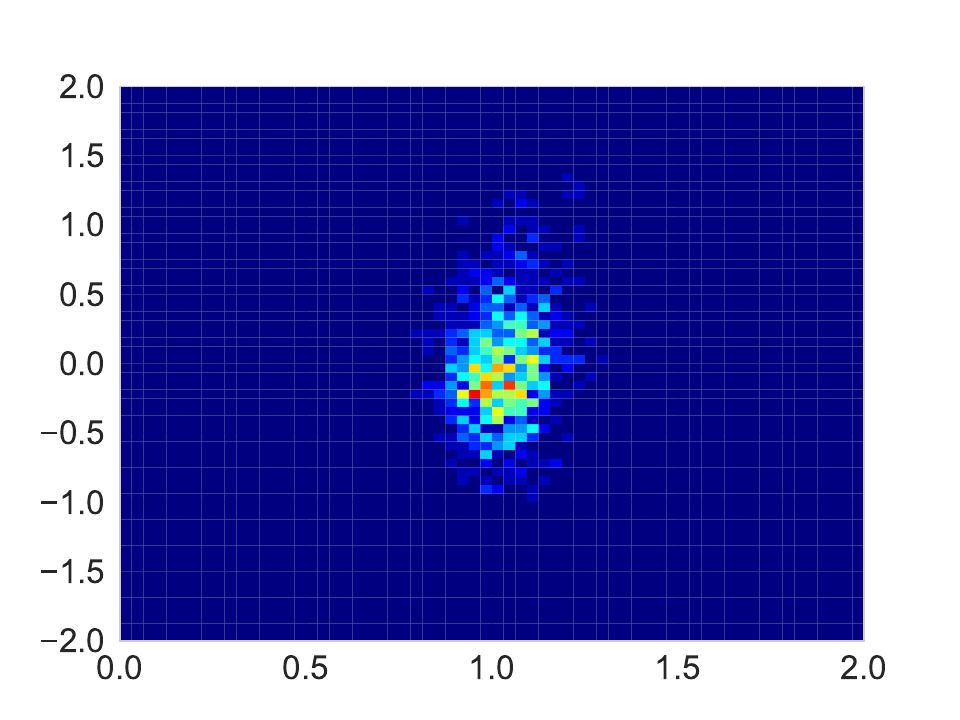}
\end{minipage}
}
\subfigure[Sampling for $a=0$]{
\begin{minipage}[t]{0.3\linewidth}
\centering
\includegraphics[width=\textwidth]{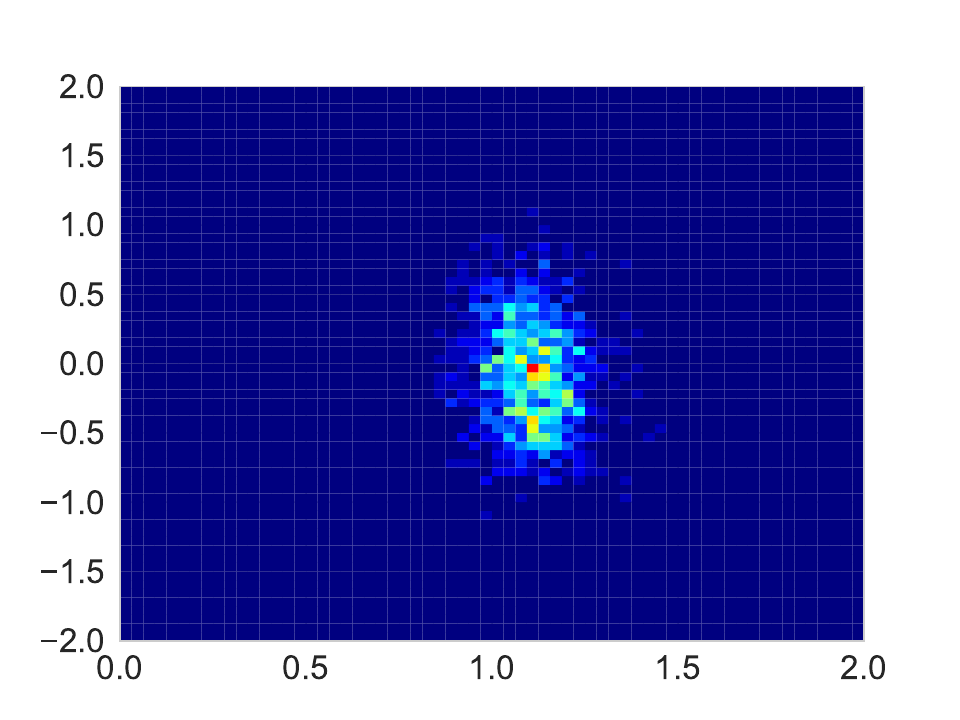}
\end{minipage}
}
\caption{Comparison of terminal distribution.}
\label{fig:double-well-terminal}
\end{figure}

\section{Conclusion and discussion}\label{sec:dis}
In summary, we have developed a framework based on CNFs and score matching to solve the dynamic formulation of the SBP in the first part. Specifically, we introduce a hypothetical velocity field to transform the Fokker-Planck equation into a transport equation, enabling the use of the CNF framework to solve the dynamic SBP and track density evolution. In the second part, we employ $\Gamma$-convergence to establish the convergence of minimizers from our neural network solutions to the theoretical ones as the regularization coefficient $\alpha \to \infty $. Our approach provides a neural network-based solution to the SBP and contributes to a deeper understanding of the convergence properties of CNF models with regularization. The convergence analysis also offers theoretical guarantees for the stability during the training of our algorithm. Future research may explore the application of analogous methodologies to design algorithms and establish convergence analyses for addressing SDE-type mathematical tasks, such as solving optimal control problem and tracking evolution of physical systems.

\subsection*{Acknowledgement}
This work is partially supported by the National Key R\&D Program of China No. 2020YFA0712000 and No. 2021YFA1002800. The work of L. Li was partially supported by NSFC 12371400 and 12031013, Shanghai Municipal Science and Technology Major Project 2021SHZDZX0102,  and Shanghai Science and Technology Commission (Grant No. 21JC1403700, 21JC1402900).

\section*{Appendix}
\appendix

\section{Missing proofs}\label{app:severalproofs}

\begin{proof}[Proof of Proposition \ref{pro:gammaconv}]
According to Proposition \ref{prop:connection}, it suffices to show that
\begin{equation}
\inf_{x_n \to x} \limsup_{n\rightarrow \infty} f_n(x_n) \leq f(x)\leq \inf_{x_n \rightarrow x} \liminf_{n\rightarrow \infty} f_n(x_n).
\end{equation}

Consider $x_n=x$, $f(x)=\lim\limits_{n\to\infty}f_n(x)=\limsup\limits_{n\to\infty}f_n(x_n)\geq \inf\limits_{x_n\to x}\limsup\limits_{n\to\infty}f_n(x_n)$.

For the other inequality, consider any $x_n\to x$.
Since $f_n\uparrow f$ pointwise,
\begin{equation}
f(x)=\lim_{n\to\infty}f_n(x)\leq\lim_{n\to\infty}\liminf_{m\to\infty}f_n(x_m)\leq\liminf_{m\to\infty}f_m(x_m).
\end{equation}
The last inequality holds since for any fixed $n$ and any $m$ larger than $n$, one has
$f_n(x_m)\le f_m(x_m)$. Hence $f_n$ is $\Gamma$-convergent to $f$.
\end{proof}

\begin{proof}[Proof of Proposition \ref{proposition:score_ODE}]
For the second equation,
\begin{equation*}
\begin{split}
\frac{\partial l(x, t)}{\partial t}  &= \frac{1}{\det (\nabla z(x,t))} \frac{\partial \det (\nabla z(x,t))}{\partial t} \\
&= \frac{1}{\det (\nabla z(x,t))} \cdot \det (\nabla z(x,t)) \cdot \mathrm{tr} \left[ (\nabla z(x,t))^{-1} \frac{\partial \nabla z(x,t)}{\partial t}  \right]\\
&=\frac{1}{\det (\nabla z(x,t))} \cdot \det (\nabla z(x,t)) \cdot \mathrm{tr} \left[ (\nabla z(x,t))^{-1} \nabla z(x,t)  \nabla_z f(z(x,t),t)    \right] \\
&=\mathrm{tr} \left[ ( \nabla_z f(z(x,t),t) \right]\\
&= \nabla_z \cdot f(z(x,t),t), 
\end{split}
\end{equation*}
where we have used following identities
\begin{equation*}
     \frac{\partial \det (A)}{\partial t}=\det (A) \cdot \mathrm{tr} \left[  A^{-1}  \frac{\partial A}{\partial t}\right], \quad \mathrm{tr} (AB)= \mathrm{tr} (BA).
\end{equation*}
For the third equation, we first have
\begin{equation}\label{appendix:eq1}
\begin{aligned}
\frac{\mathrm{d}}{\mathrm{~d} t} \nabla_z \log \rho(z(x,t),t)  & =\frac{\partial}{\partial t} \nabla_z \log \rho(z,t) |_{z=z(x,t)}+\frac{\mathrm{d} z}{\mathrm{d} t} \frac{\partial}{\partial z} \nabla_z \log \rho(z,t) |_{z=z(x,t)} \\
& =\nabla_z \frac{\partial}{\partial t} \log \rho(z,t)|_{z=z(x,t)}+ f(z,t) \nabla_z^2 \log \rho(z,t)|_{z=z(x,t)}.
\end{aligned}
\end{equation}
Using the Fokker-Plank equation \eqref{Continuituy_equation_SchB} and $f=u- \sigma^2 \nabla \log \rho$, we derive
\begin{equation}
\partial_t \log \rho(z,t)=-\nabla_z \cdot f(z,t)-f(z,t) \cdot \nabla_z \log \rho(z,t),
\end{equation}
which together with
\begin{equation}
\nabla_z\left( \nabla_z \log \rho(z,t) \cdot f(z,t)  \right)=\left[ \nabla_z^2 \log \rho(z,t) \right]\cdot f(z,t) +\nabla_z f(z,t)\cdot \nabla_z \log \rho(z,t),
\end{equation}
taking it into \eqref{appendix:eq1} one get
\begin{equation}
\frac{\mathrm{d}}{\mathrm{~d} t} \nabla_z \log \rho(z(x,t),t) =-\nabla_z ( \nabla_z \cdot f(z,t)) |_{z=z(x,t)}-\nabla_z f(z,t) \cdot\nabla \log \rho(z,t)  |_{z=z(x,t)}.
\end{equation}
\end{proof}

\section{Implementation details for solving  SBP with optimization method}\label{Appendix:primal_dual}
To validate our algorithm, we use a primal-dual hybrid algorithm to solve the SBP on the grid.This algorithm has been previously employed to compute the Earth Mover's Distance \cite{li2018parallel} and the spherical Wasserstein-Fisher-Rao metric \cite{jing2024machine}, as well as to verify the convergence from optimal transport to unbalanced optimal transport \cite{xiong2023convergence}. We begin by taking $\sigma =1$ for simplicity and reformulating the optimization problem into a convex form:
\begin{equation}
\min _{\rho, m}\left\{ \int_0^T \int_{\mathbb{R}^d} \frac{|m|^2}{2 \rho} d x d t, \partial_t \rho+\nabla \cdot m =\Delta \rho , \rho(x,0)=\rho_0(x), \rho(x,T)=\rho_1(x) \right\}.
\end{equation}
We then focus on solving the corresponding min-max problem:
\begin{equation}\label{eq:min-max}
\max_{\phi}\min_{\substack{\rho,m \\  \rho(x,0)=\rho_0(x)\\ \rho(x,T)=\rho_1(x)}} \int_0^T \int_{\Omega} \frac{|m|^2}{2 \rho}d x d t+ \int_0^T\int_{\Omega} \phi(x,t)(\partial_t \rho+\nabla \cdot m-\Delta \rho) dx dt .
\end{equation}
We employ the same discrete scheme for divergence operator and boundary conditions as \cite{xiong2023convergence}. For simplicity, we consider the space domain $\Omega=[0,1]^{d}$ and the time domain $[0,1]$. Let $\Omega_{h}$ be the discrete space mesh-grid of $\Omega$ with step size $h$, i.e. $\Omega_{h}=\{0,h,2h,\cdots,1\}^d$. The time domain $[0,1]$ is discreted with step size $\Delta t$ . Let $N_x=1/h$ be the space grid size and $N_t=1/\Delta t$ be the time grid size. All optimization variables ($\rho$, $m$ and $\phi$ ) will be defined on the grid.To ensure clarity, we first provide some definitions on the discrete space $\Omega_h$:
\begin{equation*}
    \begin{aligned}
        \int_{\Omega_h}f(x) &:=\sum_{x \in \Omega_{h},x_i \neq 0} f(x) h^d, \quad \left\langle f, g \right\rangle_h :=  \int_{\Omega_h}f(x) g(x) , \\
        \nabla_{h} \cdot m(x) &:=\sum_{i=1}^{d} D_{h,i} m(x) , x\in \Omega_h, \\
        \Delta_h \rho &:=\frac{1}{h^2} \sum_{i=1}^{d} \rho \left(x_1,\cdots,  x_i+h, \cdots , x_d\right)-2\rho \left(x_1,\cdots,  x_i, \cdots , x_d\right)+\rho \left(x_1,\cdots,  x_i-h, \cdots , x_d\right),
    \end{aligned}
\end{equation*}
where $D_{h,i}$ denotes discrete differential operator for $i$-th component in $i$-th dimension with step size $h$:
\begin{equation*}
D_{h, i} m(x)=\left(m_i \left(x_1,\cdots,  x_i+h, \cdots , x_d\right)-m_i \left(x_1,\cdots,  x_i, \cdots , x_d\right)\right) / h, \quad 0 \leq x_i \leq 1-h. \\
\end{equation*}

From the discretized form, we can describe the sizes of the optimization variables individually. The sizes of discretized variables are :$(N_t +1)\times (N_x)^d $ for $\rho$, $ N_t \times (N_x+1)^{d} \times d $ for $m$ and $ N_t \times  (N_x)^{d} $ for $\phi$. The boundary conditions are given as $\rho_1=\mu$, $\rho_{N_t+1}=\nu$ and $\left( m-\nabla \rho \right) \cdot \vec{n}=0$ for all $x \in \partial \Omega_h$. Then the discretization of \eqref{eq:min-max} can be written as:
\begin{equation}\label{eq:discrete_minmax}
\max_{\phi}\min_{\rho,m,\xi}L\left(m, \rho,\phi \right)= \sum_{t=1}^{N_t} \left\{ \int_{\Omega_h} \frac{|m_{t}|^2}{2\rho_{t}} d x +  \left\langle \phi_{t},\frac{\rho_{t+1}-\rho_{t}}{\Delta t}+\nabla_h \cdot m_{t}-\Delta_h \rho_{t} \right\rangle_h  \right\},
\end{equation}
One can add ghost points at the boundary and use discretized boundary condition to deal with central difference discretion for $\rho$, which ensures that the problem is well-defined. For instance, consider the left boundary: one can add a ghost point $\rho_t^{-1}$ and the boundary condition yields
\begin{equation*}
    m_t^{0}-\frac{\rho_{t}^{0}-\rho_{t}^{-1}}{h}=0,
\end{equation*}
which can then be incorporated into the Laplacian operator:
\begin{equation*}
    \Delta_h \rho_t^{0}=\frac{\rho_t^{-1}-2\rho_t^{0}+\rho_t^{1}}{h^2}=\frac{-\rho_t^{0}+\rho_t^{1}}{h^2}-\frac{m_t^0}{h}.
\end{equation*}
As a result, \eqref{eq:discrete_minmax} is independent of $\rho_t^{-1}$.

Then, the primal-dual hybrid algorithm proceeds with the following three steps for the variables with subscript $t$:
\begin{itemize}
    \item $\left(m_{t}^{k+1},\rho_t^{k+1}\right)=\underset{m^{\star},\rho^{\star}}{\arg \min } L\left(m^{\star},\rho^{\star},\phi^{k} \right)+\frac{1}{2 \mu}\left(\left\|m^{\star}-m^{k}_{t}\right\|_{2}^2+\left\|\rho^{\star}-\rho^k_t\right\|_{ 2}^2  \right),$
    \item  $\tilde{m}_t^{k+1}=2 m_t^{k+1}-m_t^k, \quad \tilde{\rho}_t^{k+1}=2 \rho_t^{k+1}-\rho_t^k,$
    \item $\phi_t^{k+1}=\underset{\phi^{\star}}{\arg \max } L\left(\tilde{m}_t^{k+1}, \tilde{\rho}_h^{k+1},\phi^{\star}\right)-\frac{1}{2 \tau} \left( \left\|\phi^{\star}-\phi^{k}_t\right\|_{2}^2   \right).$
\end{itemize}
The first step of the algorithm is to update $\rho$ and $m$, which is equivalent to solve the following equations:
\begin{equation}
\label{PD_algorithm_step1}
\left\{\begin{array}{l}
m^{\star}=\frac{\rho^{\star}(m_t^{k}-\mu \text{div}_{h}^{\ast} \phi_t^{k})}{\rho^{\star}+\mu}, \\
-\frac{{m^{\star}}^{2}}{2{\rho^{\star}}^2}+\frac{\phi_{t-1}^{k}-\phi^{k}_{t}}{\Delta t}- \Delta_h^{\star} \phi^{k}_{t}+\frac{1}{\mu}(\rho^{\star}-\rho_t^k)=0,
\end{array}\right.
\end{equation}
where $\text{div}_{h}^{\ast}$ represents the conjugate operator of divergence operator and $\Delta_h^{\star}$ represents the conjugate operator of Laplacian operator. By the definition of conjugate operator, we have
\begin{equation*}
    \begin{aligned}
        &\left\langle \nabla_h \cdot f ,g \right\rangle_{h} =\left\langle f ,\text{div}_{h}^{\ast} g \right\rangle_h= \left\langle f , -\nabla_h g \right\rangle_h,\\
        &\left\langle \Delta_h \rho ,\phi \right\rangle_{h}=\left\langle  \rho , \Delta_h^{\star}\phi \right\rangle_{h}=\left\langle  \rho ,\Delta_h  \phi \right\rangle_{h}.
    \end{aligned}
\end{equation*}
Thus $\text{div}_{h}^{\ast}=-\nabla_{h}$ and $\Delta_h^{\ast}= \Delta_h$, where $\nabla_h u =\left(\partial_{h, 1} u, \partial_{h, 2} u, \cdots, \partial_{h, d} u\right)$ and each $\partial_{h,i} $ denotes discrete differential operator in $i$-th dimension with step size $h$:
\begin{equation*}
\partial_{h, i} u(x)= \left(u\left(x_1,\cdots,  x_i+h, \cdots , x_d\right)-u\left(x_1,\cdots,  x_i, \cdots , x_d\right)\right) / h, \quad 0\leq x_i \leq 1-h .
\end{equation*}

Meanwhile, the term $\phi_{t-1}^{k}$ in \eqref{PD_algorithm_step1} is undefined when $t = 1$. Observe that the term $\frac{\phi_{t-1}^{k} - \phi^{k}_{t}}{\Delta t}$, arising from taking the variation of \eqref{eq:discrete_minmax} with respect to $\rho_t$, which should be $-\frac{\phi^{k}_{1}}{\Delta t}$ in the case of $t=1$. This naturally suggests the introduction of a boundary condition $\phi_0 = 0$.  Solving above system \eqref{PD_algorithm_step1} requires us to solve the roots for a third order polynomial, where $\rho^{\star}$ should be the largest real root. The third step of the algorithm is to update dual variables:
\begin{equation}
\phi_t^{k+1}=\phi_{t}^{k}+ \tau \left( \frac{\tilde{\rho}_{t+1}^{k+1}-\tilde{\rho}_{t}^{k+1}}{\Delta t}+\nabla_h \cdot \tilde{m}_{t}^{k+1}- \Delta_h \tilde{\rho}_t^{k+1}\right).
\end{equation}

\normalem
\bibliographystyle{plain}
\bibliography{SchB}

\end{document}